\newtheorem{Theorem}{Theorem}[section]
\newtheorem{Lemma}{Lemma}[section]
\begin{document}

\def\eR{\mathbf{R}}
\def\Rd{{\eR}^d}
\def\Rdd{{\eR}^{d\times d}}
\def\Rdsym{{\eR}^{d\times d}_{sym}}
\def\eN{\mathbf{N}}
\def\eZ{\mathbf{Z}}
\def\LND{L^2_{\dvr}}
\def\CrlE{\mathcal{E}}
\def\CrlH{\mathcal{H}}
\def\CrlG{\mathcal{G}}
\def\IdM{\operatorname{I}}
\def\IdOp{\operatorname{Id}}
\def\LTD{\mathcal L^2_{n,\dvr}}
\def\LTDD{\dot{\mathcal L}^2_{\dvr}}
\def\WN{W^{1,2}_{n}}
\def\WND{W^{1,2}_{0,\dvr}}
\def\WDD{\dot{W}^{1,2}}
\def\WNDT{W^{1,2}_{n,\dvr}}
\def\dist{\operatorname{dist}}
\def\dd{\mbox{d}}
\def\VS{\mathcal{V}}
\def\WS{\mathcal{W}}
\def\Meas{\mathcal{M}}
\def\NnMeas{\mathcal{M}^+}
\newcommand{\TR}{\operatorname{Tr}}
\newcommand{\essinf}{\operatorname{ess\,inf}}
\newcommand{\esssup}{\operatorname{ess\,sup}}
\newcommand{\supp}{\operatorname{supp}}
\newcommand{\spn}{\operatorname{span}}
\newcommand\dx{\; \mathrm{d}x}
\newcommand\dy{\; \mathrm{d}y}
\newcommand\dz{\; \mathrm{d}z}
\newcommand\dt{\; \mathrm{d}t}
\newcommand\ds{\; \mathrm{d}s}
\newcommand\diff{\mathrm{d}}
\newcommand\dvr{\mathop{\mathrm{div}}\nolimits}
\newcommand\diam{\mathrm{diam}}
\newcommand\tder{\partial_t}
\newcommand\tr{\mathop{\mathrm{tr}}\nolimits}
\newcommand\curlP{\mathop{\mathcal{P}}\nolimits}
\newcommand\curlQ{\mathop{\mathcal{Q}}\nolimits}
\newcommand\lspan{\mathop{\mathrm{span}}\nolimits}
\newcommand{\IED}{IED}
\newcommand\Hext{H_{ext}}
\newcommand\Adj{\mathrm{Adj}}
\newcommand\rot{\mathrm{rot}}

\title{On dissipative solutions to a system arising in viscoelasticity}
\author{Martin Kalousek}
\date{}
\affil{\small{Institute of Mathematics, University of W\"urzburg, Emil-Fischer-Str. 40, 97074 W\"urzburg, Germany\\ email: \textsl{martin.kalousek@mathematik.uni-wuerzburg.de}}}
\maketitle
\begin{abstract}
 We consider a model for an incompressible visoelastic fluid. It consists of the Navier-Stokes equations involving an elastic term in the stress tensor and a transport equation for the evolution of the deformation gradient. The novel feature of the paper is the introduction of the notion of a dissipative solution and its analysis. We show that dissipative solutions exist globally in time for arbitrary finite energy initial data and that a dissipative solution and a strong solution emanating from the same initial data coincide as long as the latter exists.
\end{abstract}
\section{Introduction}
We consider $T>0$ and a smooth bounded domain $\Omega\subset\eR^d$, $d=2,3$, and analyze the following system of partial differential equations 
\begin{equation}\label{ClassForm}
	\begin{aligned}
		\tder u+\dvr(u\otimes u)-\Delta u+\nabla p&=\dvr(FF^T)\\
		\dvr u&=0,\\
		\tder F+\dvr(u\otimes F)&=\nabla u F,\\
		\dvr F&=0
	\end{aligned}
\end{equation}
for the velocity of the fluid $u\!:(0,T)\times\Omega\to\eR^d$, the pressure $p\!:(0,T)\times\Omega\to \eR$ and the deformation gradient $F\!:(0,T)\times\Omega\to\eR^{d\times d}$. System \eqref{ClassForm} is supplemented by the initial data
\begin{equation*}
	u(0,\cdot)=u_0,\ F(0,\cdot)=F_0 \text{ in }\Omega
\end{equation*}
with $\dvr u_0=0$, $\dvr F_0=0$ in $\Omega$. We also add the homogeneous Dirichlet boundary conditions for the velocity, i.e.,
\begin{equation*}
	u=0\text{ on }(0,T)\times\partial\Omega.
\end{equation*}
Concerning the derivation of the above system we note that equation \eqref{ClassForm}$_1$ has been obtained via the energetic variational procedure in the special case of the Hookean elasticity, see e.g. \cite[Sec. 1.3]{LiLiZh2005}, while \eqref{ClassForm}$_3$ is just an expression of the time derivative of the deformation gradient, which is originally formulated in Lagrangian coordinates, in Eulerian coordinates, for details see e.g. \cite[Sec. 2]{LiWa01}.
Its physical background is discussed in \cite{Gu81} and \cite{La99}, whereas results of numerical simulations based on system \eqref{ClassForm} are presented in \cite{BaGoOb04,KuMa00,YuSasSe07}. 

The theory concerning strong solutions to \eqref{ClassForm} has been widely developed.
The local in time existence of strong solutions together with a blow-up criterion for a strong solution on the whole space, torus or a smooth bounded domain in $\eR^d$, $d=2,3$ was proven in \cite{LiLiZh2005}. Introducing an auxiliary vector field that replaces the deformation gradient, the authors of the latter paper showed also the global in time existence of strong solution for initial data satisfying certain smallness assumption on the whole space or torus in $\eR^2$. The same result was later obtained in \cite{LeLiZh2008} also for the three dimensional situation using a different technique that allows to work directly with the deformation gradient. Also the asymptotic behavior of a strong solution to system \eqref{ClassForm} was studied. The authors of \cite{LiZh08} showed the exponential decay for a strong solution with initial data sufficiently close to the equilibrium. The spatial regularity of strong solutions from results mentioned above is formulated in the framework of Sobolev spaces. The well-posedness of the Cauchy problem for \eqref{ClassForm} was shown in a functional setting invariant by the scaling of \eqref{ClassForm} in terms of the Besov spatial regularity in \cite{ZhFa12}. 

The goal of this paper is to investigate the existence of global in time solutions to \eqref{ClassForm} possibly with general initial data. The main difficulty is to show the convergence of the product $FF^\top$ for suitable approximations at least in the sense of distributions, a problem that has not been solved for weak solutions yet. In order to circumvent this difficulty we introduce a new class of dissipative solutions to \eqref{ClassForm} inspired by the same concept for a hyperbolic system of equations in \cite{FeRoScZa18}. We note that this concept of solution for incompressible Euler system was introduced in \cite{Li96}. Roughly explained, the dissipative solution satisfies \eqref{ClassForm} in the sense of integral identities with smooth test functions whose part corresponding to the right hand side of \eqref{ClassForm}$_1$ contains an extra term regarded as a defect measure. Moreover, a function called dissipation defect appears in the energy inequality. This dissipation defect is attributed to singularities that may hypothetically emerge during the fluid evolution. It dominates in a certain sense the additional term on the right hand side of the integral formulation of \eqref{ClassForm}$_1$, see \eqref{DefMesEst}. 

We end the introductory part with the outline of the paper. In Section \ref{Sec:Stat}, after necessary preliminaries we state a precise definition of a dissipative solution to \eqref{ClassForm} and formulate the main results of the paper concerning the global in time existence of a dissipative solution and the dissipative-strong uniqueness for solutions to \eqref{ClassForm}. 
Section \ref{Sec:DisExist} is devoted to the proof of the existence result and in Section \ref{Sec:DSUniq} the uniqueness result is proven. Finally, the Appendix contains several assertions about the space $\CrlH(\Omega)$.
\section{Formulation of the results}\label{Sec:Stat}
Let us introduce the notation used further. By $B(x,r)$ we mean the ball centered at $x$ with radius $r$. For vectors and matrices the scalar product is denoted by $\cdot$, a centered dot. For a vector $a\in\eR^l$ and a matrix $B\in\eR^{m\times n}$ the outer product $a\otimes B$ denotes the tensor with components $a_iB_{jk}$, $i=1,\ldots, l$, $j=1,\ldots, m$, $k=1,\ldots,n$. By $\mathcal{M}(\overline\Omega)$ we mean the space of Radon measures on $\overline\Omega$ and $\mathcal{M}^+(\overline\Omega)$ stands for the space of nonnegative Radon measures on $\overline\Omega$ for $\Omega\subset\eR^d$ open. For $k\in\eN$ and $q\in[1,\infty]$, $(L^q(\Omega),\|\cdot\|_q)$ and $(W^{k,q}(\Omega),\|\cdot\|_{k,q})$ denote the standard Lebesgue and Sobolev spaces. If $X$ is a Banach space of scalar functions then $X^m$ stands for a space of vector-valued functions with $m$ components each belonging to $X$. In a similar way, $X^{m\times n}$ is a space of matrix-valued functions. In order to keep the notation short, we write e.g. $ \|\cdot \|_{L^q(\Omega)}$ instead of $\|\cdot \|_{L^q(\Omega)^m}$, $ \|\cdot \|_{W^{k,q}(\Omega)}$ instead of $\|\cdot \|_{W^{k,q}(\Omega)^{m\times n}}$, and $\|\cdot \|_{L^q(0,t;L^r(\Omega))}$ instead of $\|\cdot \|_{L^q(0,t;L^r(\Omega)^{m})}$, etc. For Banach spaces $X,Y$ the notation $X\hookrightarrow Y$, $X\stackrel{C}{\hookrightarrow} Y$, is used for expressing embedding of $X$ to $Y$ that is continuous, compact respectively. The dual of $X$ is denoted $X^*$ and the notation $\left\langle\cdot,\cdot\right\rangle$ is used for the corresponding duality pairing. By $C_w([0,T];X)$ we mean the set of functions $f\in L^\infty(0,T;X)$ such that the real-valued mapping $t\mapsto\left\langle \phi, f(t)\right\rangle$ is continuous on $[0,T]$ for any $\phi\in X^*$. Further, we set 
\begin{equation*}
\begin{split}
		\LND(\Omega)&=\overline{\{v\in C^\infty_c(\Omega)^d;\ \dvr v=0\text{ in }\Omega\}}^{\|\cdot\|_{L^2}},\\
		\CrlH(\Omega)&=\{\Phi\in L^2(\Omega)^{d\times d};\ \dvr \Phi=0\text{ in }\Omega\}.
		\end{split}
\end{equation*}
Let us note that the distributional divergence is considered in the above expression, i.e., for a $d\times d$ matrix-valued $\Phi$
\begin{equation*}
	\left\langle \dvr\Phi,\varphi\right\rangle=-\int_\Omega \Phi\cdot(\nabla\varphi)^\top,\text{ for all }\varphi\in C^\infty_c(\Omega)^d.
\end{equation*} 
For a smooth $\Phi$ we define $(\dvr(\Phi))_j=\sum_{i=1}^d\partial_i\Phi_{ij}$.
The following notation is used for the subspaces of $W^{1,2}(\Omega)^d$, $W^{1,2}(\Omega)^{d\times d}$ respectively:
\begin{align*}
	\WS(\Omega)&=\{\Phi\in W^{1,2}(\Omega)^{d\times d};\dvr \Phi=0\text{ in }\Omega\},\\
	\WND(\Omega)^d&=\overline{\{v\in C^\infty_c(\Omega)^d: \dvr v =0\text{ in }\Omega\}}^{\|\cdot\|_{W^{1,2}}},\\
	\VS(\Omega)&=\WND(\Omega)^d\cap W^{3,2}(\Omega)^d.
\end{align*} 
We note that the embedding $\VS\hookrightarrow C^1(\overline{\Omega})^d$, $d=2,3$ holds.
By $\omega$ we denote a mollifier, i.e., $\omega\in C^\infty_c(B(0,1))$, $\omega\geq 0$, $\int_{\eR^d}\omega=1$ and define for $\rho>0$ $\omega_\rho(\cdot)=\rho^{-d}\omega\left(\frac{\cdot}{\rho}\right)$.
We continue with the introduction of the notion of a dissipative solution. Assuming that 
\begin{equation}\label{InitCondRegularity}
	u_0\in \LND(\Omega),\ F_0\in \CrlH(\Omega)
\end{equation}
we define a dissipative solution to \eqref{ClassForm} with initial conditions \eqref{InitCondRegularity} and dissipation defect $\mathcal{D}\geq 0$, $\mathcal{D}\in L^\infty(0,T)$ as a pair $(u,F)$ enjoying the regularity
\begin{equation*}
	u\in C_w(0,T;\LND(\Omega))\cap L^2(0,T;\WND(\Omega)^d),\ F\in C_w(0,T;\CrlH(\Omega))
\end{equation*} 
satisfying the energy inequality
\begin{equation}\label{DSEI}
	\frac{1}{2}(\|u(t)\|^2_{L^2(\Omega)}+\|F(t)\|^2_{L^2(\Omega)})+\int_0^t\|\nabla u(s)\|^2_{L^2(\Omega)}\ds+\mathcal{D}(t)\leq\frac{1}{2}(\|u_0\|^2_{L^2(\Omega)}+\|F_0\|^2_{L^2(\Omega)})
\end{equation}
for a.a. $t\in(0,T)$ and 
\begin{equation}\label{WeakForms}
\begin{split}
	(u(t),\psi(t))-(u_0,\psi(0))&=\int_0^t(u,\tder\psi)+(u\otimes u,\nabla\psi)-(\nabla u+FF^T,\nabla\psi)\ds +\int_0^t\left\langle\mathcal{R}^M,\nabla \psi\right\rangle\ds,\\
	(F(t),\Psi(t))-(F_0,\Psi(0))&=\int_0^t(F,\tder\Psi)+(u\otimes F,\nabla \Psi)+(\nabla uF,\Psi)\ds
\end{split}
\end{equation}
for all $t\in(0,T)$, $\Psi\in C^\infty([0,T]\times\overline{\Omega})^{d\times d}$, $\psi\in C^\infty_c([0,T]\times\Omega)^d$ with $\dvr\psi=0$, $\dvr\Psi=0$ in $(0,T)\times\Omega$. The corrector $\mathcal{R}^M\in L^\infty(0,T;\Meas(\overline\Omega)^{d\times d})$
satisfies
\begin{equation}\label{DefMesEst}
	\int_0^t\|\mathcal{R}^M(s)\|_{\Meas(\overline\Omega)}\ds\leq c\int_0^t\mathcal{D}(s)\ds.
\end{equation} 
The initial conditions are attained in the sense
\begin{equation}\label{InCondAtt}
	\lim_{t\to 0_+}\|u(t)-u_0\|^2_{L^2(\Omega)}+\|F(t)-F_0\|^2_{L^2(\Omega)}=0.
\end{equation}
For the sake of clarity the notation $(u,v)=\int_\Omega u(x)\cdot v(x)\dx$ is used.	Generic constants are denoted by $c$. For $t>0$ and $\Omega\subset\eR^d$ we use the notation $Q_t$ for the time-space cylinder $(0,t)\times \Omega$. Having all ingredients introduced we can state the main results of the paper.
\begin{Theorem}\label{Thm:Exis}
	For an arbitrary $T\in(0,\infty)$, a bounded Lipschitz domain $\Omega\subset\eR^d$, $d=2,3$, $u_0$ and $F_0$ satisfying \eqref{InitCondRegularity} there exists a dissipative solution to problem \eqref{ClassForm}.
\end{Theorem}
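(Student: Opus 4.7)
The plan is a two-layer approximation. Fix $\epsilon>0$ and regularize the transport equation for $F$ by adding an artificial viscosity $\epsilon\Delta F$ with homogeneous Neumann boundary conditions; a short calculation using $\dvr u=0$ confirms that $\dvr F=0$ is preserved by the regularized evolution. Couple this with a Galerkin scheme for the velocity in an increasing family of finite-dimensional subspaces of $\VS(\Omega)$. For fixed $n\in\eN$ and $\epsilon>0$ the coupled system is finite-dimensional in $u$ and linear parabolic in $F$ once $u$ is specified, so existence of an approximate pair $(u_n^\epsilon,F_n^\epsilon)$ on $[0,T]$ follows from a standard fixed-point argument.

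The pointwise identity $(FF^T):\nabla u=(\nabla u F):F$ makes the elastic stress in the momentum equation cancel with the stretching term in the $F$-equation when one tests by $u_n^\epsilon$ and $F_n^\epsilon$ respectively, yielding the balance
\begin{equation*}
\tfrac{1}{2}(\|u_n^\epsilon(t)\|_{L^2}^2+\|F_n^\epsilon(t)\|_{L^2}^2)+\int_0^t\|\nabla u_n^\epsilon\|_{L^2}^2\ds+\epsilon\int_0^t\|\nabla F_n^\epsilon\|_{L^2}^2\ds=\tfrac{1}{2}(\|u_0\|_{L^2}^2+\|F_{0,n}\|_{L^2}^2),
\end{equation*}
where $F_{0,n}\to F_0$ in $L^2$. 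Bounds on $\tder u_n^\epsilon$ in $\VS^*$ (from testing the momentum equation with $\phi\in\VS$ and using $\VS\hookrightarrow C^1(\overline\Omega)$ to control the elastic contribution) and on $\tder F_n^\epsilon$ in $L^2(0,T;W^{-1,2})$, together with the parabolic gain $F_n^\epsilon\in L^2(0,T;W^{1,2})$, furnish Aubin--Lions compactness and allow the limit $n\to\infty$, producing a weak solution $(u_\epsilon,F_\epsilon)$ of the $\epsilon$-regularized system with the same energy balance.

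The main obstacle is the limit $\epsilon\to 0$: the parabolic gain for $F$ disappears, so $F_\epsilon\rightharpoonup^* F$ only weakly-$*$ in $L^\infty(0,T;L^2(\Omega))$, whereas $u_\epsilon\to u$ strongly in $L^2(Q_T)$ (the bound on $\tder u_\epsilon$ in $L^\infty(0,T;\VS^*)$ survives since $F_\epsilon F_\epsilon^T$ is uniformly bounded in $L^\infty(0,T;L^1(\Omega))$). To resolve the elastic product, decompose
\begin{equation*}
F_\epsilon F_\epsilon^T = FF^T+(F_\epsilon-F)(F_\epsilon-F)^T+F(F_\epsilon-F)^T+(F_\epsilon-F)F^T.
\end{equation*}
The two cross terms vanish in the duality with $L^1(0,T;C(\overline\Omega))$ by weak-strong arguments (the fixed $L^\infty(L^2)$ factor $F$ multiplied against $\phi\in C(\overline\Omega)$ becomes an $L^2$ test function against the weakly null $F_\epsilon-F$), while the quadratic term admits, up to a subsequence, a weak-$*$ limit $\mathcal{R}^M\in L^\infty(0,T;\Meas(\overline\Omega)^{d\times d})$ that is matrix-valued positive semidefinite. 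Setting
\begin{equation*}
\mathcal{D}(t):=\tfrac{1}{2}\bigl(\liminf_{\epsilon\to 0}\|F_\epsilon(t)\|_{L^2}^2-\|F(t)\|_{L^2}^2\bigr)\ge 0,
\end{equation*}
one reads off \eqref{DSEI} from weak lower semicontinuity of $\int_0^t\|\nabla u\|^2\ds$ together with the definition of $\mathcal{D}$; moreover $\tr\mathcal{R}^M(t)$ is the weak-$*$ limit of $|F_\epsilon(t)-F(t)|^2$ in $\Meas(\overline\Omega)$, hence $\int_\Omega\tr\mathcal{R}^M(t)=2\mathcal{D}(t)$, and positive semidefiniteness of $\mathcal{R}^M$ yields $\|\mathcal{R}^M(t)\|_{\Meas(\overline\Omega)}\le c\,\mathcal{D}(t)$, which integrated in time is \eqref{DefMesEst}.

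Passing to the limit in the remaining nonlinearities is routine once $u_\epsilon\to u$ strongly in $L^2(Q_T)$. In the $F$-equation the stretching term $(\nabla u_\epsilon F_\epsilon,\Psi)$ is rewritten by integration by parts, using $\dvr F_\epsilon=0$ (preserved in the limit since $\dvr$ commutes with weak convergence) and the Dirichlet condition $u_\epsilon=0$ on $\partial\Omega$, as a strong-weak product of $u_\epsilon$ and $F_\epsilon$ tested against derivatives of $\Psi$; the transport term $(u_\epsilon\otimes F_\epsilon,\nabla\Psi)$ is analogous, so \eqref{WeakForms}$_2$ passes to the limit. The momentum equation then takes the form \eqref{WeakForms}$_1$ with the corrector $\mathcal{R}^M$ appearing on the right-hand side. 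Finally, attainment of the initial data in the sense \eqref{InCondAtt} is standard: the integral identities imply weak continuity $u,F\in C_w([0,T];L^2(\Omega))$, the energy inequality gives $\limsup_{t\to 0_+}(\|u(t)\|_{L^2}^2+\|F(t)\|_{L^2}^2)\le\|u_0\|_{L^2}^2+\|F_0\|_{L^2}^2$, and combined with weak lower semicontinuity of the norms this upgrades each weak limit to a strong one.
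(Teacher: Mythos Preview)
Your overall strategy coincides with the paper's: an $\epsilon$-viscous regularization of the transport equation for $F$, a Galerkin layer underneath, the energy cancellation $(FF^\top):\nabla u=(\nabla u\,F):F$, then the limit $\epsilon\to 0$ with the elastic stress handled by the decomposition
\[
F_\epsilon F_\epsilon^\top = FF^\top + (F_\epsilon-F)(F_\epsilon-F)^\top + \text{(cross terms)},
\]
the quadratic remainder becoming the corrector $\mathcal{R}^M$. The paper differs only in bookkeeping: it runs the Galerkin scheme simultaneously in $u$ and $F$ (projecting onto a basis of $\CrlH(\Omega)$, which builds in $\dvr F=0$ without your preservation argument) and it adds a Lagrange multiplier $(\nabla\pi)^\top$ to the $\epsilon\Delta F$ term. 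These are inessential variants.

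There is, however, a genuine gap in your definition of the dissipation defect. You set
\[
\mathcal{D}(t)=\tfrac{1}{2}\Bigl(\liminf_{\epsilon\to 0}\|F_\epsilon(t)\|_{L^2}^2-\|F(t)\|_{L^2}^2\Bigr)
\]
and then claim $\int_{\overline\Omega}\tr\mathcal{R}^M(t)=2\mathcal{D}(t)$. But $\mathcal{R}^M$ is obtained as a weak-$*$ limit in $L^\infty(0,T;\Meas(\overline\Omega))$, i.e.\ after integration in time against $L^1(0,T;C(\overline\Omega))$ test functions, whereas your $\mathcal{D}$ is a pointwise-in-$t$ liminf. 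Fatou's lemma only gives, for $\theta\geq 0$,
\[
\int_0^T\theta(t)\,2\mathcal{D}(t)\dt \;\leq\; \lim_{\epsilon}\int_0^T\theta(t)\|F_\epsilon(t)-F(t)\|_{L^2}^2\dt \;=\;\int_0^T\theta(t)\langle\tr\mathcal{R}^M(t),1\rangle\dt,
\]
so $2\mathcal{D}(t)\leq\langle\tr\mathcal{R}^M(t),1\rangle$ for a.e.\ $t$, which is the \emph{wrong direction} for \eqref{DefMesEst}: you need $\mathcal{D}$ to dominate $\mathcal{R}^M$, not the other way round. The paper avoids this by defining $\mathcal{D}$ directly through the weak-$*$ limit: pass to a subsequence so that $|F_\epsilon|^2\rightharpoonup^*\overline{|F|^2}$ in $L^\infty(0,T;\NnMeas(\overline\Omega))$, set $\mathcal{G}(t)=\overline{|F|^2}(t)-|F(t)|^2\dx$, and let $\mathcal{D}(t)\geq\mathcal{G}(t)(\overline\Omega)$. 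Then the identity $\tr\mathcal{R}^M=\mathcal{G}$ holds in $L^\infty(0,T;\Meas(\overline\Omega))$ by construction, and positive semidefiniteness of $\mathcal{R}^M$ gives $\|\mathcal{R}^M(t)\|_{\Meas(\overline\Omega)}\leq c\,\mathcal{D}(t)$ as required. (The paper additionally folds the possible defect of $|\nabla u_\epsilon|^2$ into $\mathcal{D}$; this is not needed for \eqref{DefMesEst} but makes the passage to \eqref{DSEI} cleaner than invoking weak lower semicontinuity.) A minor side remark: your bound on $\tder u_\epsilon$ should be in $L^2(0,T;\VS(\Omega)^*)$, not $L^\infty$, since the viscous term $\nabla u_\epsilon$ is only $L^2$ in time.
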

Dissipative solutions do not provide any information about the form of neither the dissipation defect $\mathcal{D}$ nor the corrector $\mathcal{R}^M$. Therefore such a notion of solution might seem very weak. Nevertheless, the dissipative solution to \eqref{ClassForm} can be related to a strong solution to \eqref{ClassForm} whose existence has been extensively investigated for various types of boundary conditions for the velocity, see e.g. \cite{LeLiZh2008, LiLiZh2005}. For the existence result for the problem with the homogeneous Dirichlet boundary condition we refer to \cite[Theorem 2.2]{LiLiZh2005}. The relation between the dissipative and strong solutions to \eqref{ClassForm} emanating from the same initial data is formulated in the following theorem.
\begin{Theorem}\label{Thm:DisStrongUniq}
	Let $\Omega\subset\eR^d$, $d=2,3$, be a smooth bounded domain and the initial data enjoy the regularity
	\begin{equation*}
		u_0\in \VS(\Omega),\quad F_0\in \WS(\Omega)\cap W^{3,2}(\Omega)^{d\times d}.
	\end{equation*}
	Let $(u,F)$ be a dissipative solution to \eqref{ClassForm} and $(\tilde u,\tilde F)$ be a strong solution to \eqref{ClassForm}, i.e., 	
	\begin{equation*}
		\begin{split}
			&\tilde u\in L^\infty(0,T;\VS(\Omega))\cap L^2(0,T;W^{4,2}(\Omega)^d),\quad \tder \tilde u\in L^2(0,T;W^{2,2}(\Omega)^d)  \\
			&\tilde F\in L^\infty(0,T; \WS(\Omega)\cap W^{3,2}(\Omega)^{d\times d}),\quad \tder \tilde F\in L^\infty(0,T; W^{1,2}(\Omega)^{d\times d})
			\end{split}
	\end{equation*}
	and \eqref{WeakForms} are satisfied with $\mathcal{R}^M=0$ in $(0,T)\times\Omega$. Then it follows that $(u,F)=(\tilde u,\tilde F)$ a.e. in $(0,T)\times\Omega$.
\end{Theorem}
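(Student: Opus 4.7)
The plan is a relative energy (modulated energy) argument in the spirit of weak-strong uniqueness for fluid systems. Define
\[
\mathcal{E}(t) := \tfrac{1}{2}\|u(t) - \tilde u(t)\|^2_{L^2(\Omega)} + \tfrac{1}{2}\|F(t) - \tilde F(t)\|^2_{L^2(\Omega)},
\]
which satisfies $\mathcal{E}(0)=0$ thanks to \eqref{InCondAtt} and the shared initial data. The target is a Gronwall-type inequality of the form
\[
\mathcal{E}(t) + \mathcal{D}(t) \leq C \int_0^t \bigl(\mathcal{E}(s) + \mathcal{D}(s)\bigr) \ds \quad \text{for a.e. } t \in (0,T),
\]
which forces $\mathcal{E}\equiv 0$ and hence $(u,F)=(\tilde u,\tilde F)$.

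To derive the inequality I combine four identities: (i) the energy inequality \eqref{DSEI} for the dissipative pair $(u,F)$; (ii) the energy equality for $(\tilde u,\tilde F)$, produced by testing the strong system by $\tilde u,\tilde F$ and observing that the elastic and production contributions cancel through the identity $(\nabla\tilde u,\tilde F\tilde F^T)=(\nabla\tilde u\,\tilde F,\tilde F)$; (iii) identity \eqref{WeakForms}$_1$ tested with $\tilde u$ and \eqref{WeakForms}$_2$ tested with $\tilde F$; and (iv) the identity obtained by multiplying the strong equations for $\tilde u,\tilde F$ by $u$ and $F$ and integrating by parts. The embeddings $\VS(\Omega)\hookrightarrow C^1(\overline\Omega)^d$ and $W^{3,2}(\Omega)\hookrightarrow C^1(\overline\Omega)$ valid in $d=2,3$, together with the stated time regularity of $\tder\tilde u$ and $\tder\tilde F$, guarantee that every term integrates. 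Using $\tilde u$ as a test function in \eqref{WeakForms}$_1$ requires an approximation step, since $\tilde u$ only vanishes on $\partial\Omega$; because $\WND(\Omega)^d$ is by definition the $W^{1,2}$-closure of the compactly supported divergence-free smooth fields and all terms in \eqref{WeakForms}$_1$ are continuous in that topology, the identity extends by density.

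Assembling the four relations, every cross-term on the right-hand side can be rewritten purely in terms of the differences $u-\tilde u,\,F-\tilde F$. The convective terms yield, through $a\otimes a-b\otimes b=(a-b)\otimes a+b\otimes(a-b)$, expressions of the form $\bigl((u-\tilde u)\otimes(u-\tilde u),\nabla\tilde u\bigr)$ bounded by $\|\nabla\tilde u\|_{L^\infty(Q_T)}\mathcal{E}(s)$; the elastic coupling $FF^T-\tilde F\tilde F^T$ and the production term $\nabla u\,F-\nabla\tilde u\,\tilde F$ decompose analogously, with every contribution containing $\nabla(u-\tilde u)$ absorbed into $\int_0^t\|\nabla(u-\tilde u)\|^2_{L^2}$ via Young's inequality. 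The quadratic dissipation arising from $\int_0^t(\|\nabla u\|^2-2(\nabla u,\nabla\tilde u)+\|\nabla\tilde u\|^2)$ assembles into $\int_0^t\|\nabla(u-\tilde u)\|^2_{L^2}$ on the left.

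The main obstacle, and the point at which the argument departs from classical weak-strong uniqueness, is the corrector term $\int_0^t\langle\mathcal{R}^M,\nabla\tilde u\rangle\ds$ generated by testing \eqref{WeakForms}$_1$ with $\tilde u$. Using \eqref{DefMesEst} together with the continuity of $\nabla\tilde u$ on $\overline\Omega$ it is controlled by
\[
\left|\int_0^t\langle\mathcal{R}^M,\nabla\tilde u\rangle\ds\right|\leq\|\nabla\tilde u\|_{L^\infty(Q_t)}\int_0^t\|\mathcal{R}^M(s)\|_{\mathcal{M}(\overline\Omega)}\ds\leq c\,\|\nabla\tilde u\|_{L^\infty(Q_t)}\int_0^t\mathcal{D}(s)\ds.
\]
Since $\mathcal{D}(t)$ already sits on the left-hand side of the relative energy inequality through \eqref{DSEI}, this excess is compensated by grouping the quantity $t\mapsto\mathcal{E}(t)+\mathcal{D}(t)$. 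Gronwall's lemma then yields $\mathcal{E}(t)+\mathcal{D}(t)=0$ for a.e. $t\in(0,T)$, whence $u=\tilde u$ and $F=\tilde F$ almost everywhere in $(0,T)\times\Omega$.
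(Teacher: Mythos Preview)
Your proposal is correct and follows essentially the same relative-energy (modulated energy) argument as the paper: expand $\mathcal{E}$ as the sum of the two energies minus the cross terms, compute the cross terms by inserting $\tilde u,\tilde F$ (via smooth approximations) into \eqref{WeakForms} and substituting the strong equations for $\tder\tilde u,\tder\tilde F$, and estimate the corrector contribution by $\|\nabla\tilde u\|_{L^\infty}\int_0^t\mathcal{D}$ using \eqref{DefMesEst} before closing with Gronwall. The only cosmetic difference is that the paper builds the gradient dissipation $\int_0^t\|\nabla(u-\tilde u)\|_{L^2}^2$ directly into the relative energy functional rather than assembling it a posteriori.
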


\section{Proof of the existence theorem}\label{Sec:DisExist}
\subsection{Approximative system}
We begin with the introduction of an approximative system to \eqref{ClassForm}. Namely, we consider for $\varepsilon>0$ the system
\begin{equation}\label{ApproxSysClass}
	\begin{alignedat}{2}
		\tder u+\dvr(u\otimes u)-\Delta u+\nabla p&=\dvr(FF^\top),\\
		\dvr u&=0,\\
		\tder F+\dvr(u\otimes F)&=\nabla uF+\varepsilon (\Delta F-(\nabla\pi)^\top),\\
		\dvr F&=0
	\end{alignedat}
\end{equation}
in $(0,T)\times\Omega$ with boundary conditions $u=0$ and $(\nabla F-\pi\otimes\IdM)n=0$ on $(0,T)\times\partial\Omega$ and initial conditions $u(0,\cdot)=u_0$ and $F(0,\cdot)=F_0$ in $\Omega$ with $\dvr u_0=0$ and $\dvr F_0=0$ in $\Omega$. We note that the function $\pi$ appearing in \eqref{ApproxSysClass} is just a multiplier whose presence is due to the constraint $\dvr F=0$. 

The first task is to show the existence of a solution to \eqref{ApproxSysClass} which is done in the following lemma. 
\begin{Lemma}\label{Lem:ApproxEx}
	Under the assumptions of Theorem \ref{Thm:Exis} there exists a weak solution to approximate problem \eqref{ApproxSysClass}, i.e., a pair $(u, F)$ possessing the regularity 
	\begin{equation*}
	\begin{split}
		&u\in L^\infty (0,T;\LND(\Omega))\cap L^2(0,T;\WND(\Omega)^d),\ \tder u\in L^1(0,T;(\WND(\Omega)^d)^*)\\
		&F\in L^\infty(0,T;\CrlH(\Omega))\cap L^2(0,T;\WS(\Omega)),\ \tder F\in L^1(0,T;(\WS(\Omega))^*)
	\end{split}
	\end{equation*}
	and satisfying
	\begin{equation}\label{UFAppSys}
	\begin{alignedat}{2}
			\left\langle\tder u,\omega\right\rangle-\left(u\otimes u,\nabla \omega\right)+\left(\nabla u,\nabla\omega\right)+\left(FF^\top,\nabla\omega\right)=&0 &&\text{ for all }\omega\in \WND(\Omega)^d,\\
		\left\langle \tder F,\Phi\right\rangle-(u\otimes F,\nabla \Phi)-(\nabla u F,\Phi)+\varepsilon(\nabla F,\nabla\Phi)=&0 &&  \text{ for all }\Phi\in \WS(\Omega).
	\end{alignedat}
	\end{equation}
a.e. in  $(0,T)$.
	Moreover, $(u, F)$ fulfills
	\begin{equation}\label{EnergyBound}
	\begin{split}
		\frac{1}{2}\left(\|u(t)\|^2_{L^2(\Omega)}+\|F(t)\|^2_{L^2(\Omega)}\right)+\int_0^t\|\nabla u\|^2_{L^2(\Omega)}+\varepsilon\|\nabla F\|^2_{L^2(\Omega)}\leq\frac{1}{2}\left(\|u_0\|_{L^2(\Omega)}^2+\|F_0\|_{L^2(\Omega)}^2\right)
	\end{split}
\end{equation}
for a.a. $t\in(0,T)$ and
\begin{equation}\label{InCondAttPApp}
	\lim_{t\to 0_+}\|u(t)-u_0\|^2_{L^2(\Omega)}+\|F(t)-F_0\|^2_{L^2(\Omega)}=0.
\end{equation}
\end{Lemma}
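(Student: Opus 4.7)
My plan is a Galerkin approximation driven by the natural energy estimate associated with the $\varepsilon$-regularization, followed by a compactness passage to the limit.

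I would first fix Hilbert bases $\{\omega_i\}_{i\in\eN}\subset\VS(\Omega)$ of $\WND(\Omega)^d$ and $\{\Phi_j\}_{j\in\eN}\subset\WS(\Omega)\cap W^{2,2}(\Omega)^{d\times d}$ of $\WS(\Omega)$, each orthonormal in $L^2$. The existence of the latter, smooth enough to serve as test functions in \eqref{UFAppSys}, is precisely the content of the appendix on $\CrlH(\Omega)$: a spectral decomposition of an elliptic operator compatible with the natural boundary condition $(\nabla F-\pi\otimes\IdM)n=0$. Writing
\begin{equation*}
u_n(t)=\sum_{i=1}^n c^n_i(t)\omega_i,\qquad F_n(t)=\sum_{j=1}^n d^n_j(t)\Phi_j,
\end{equation*}
and projecting \eqref{UFAppSys} onto $\spn\{\omega_1,\dots,\omega_n\}$ and $\spn\{\Phi_1,\dots,\Phi_n\}$, I obtain an ODE system for $(c^n,d^n)$ with polynomial right-hand side, hence a local Carath\'eodory solution.

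Next, testing with $\omega=u_n$ in the first identity and with $\Phi=F_n$ in the second and summing, the transport terms $(u_n\otimes u_n,\nabla u_n)$ and $(u_n\otimes F_n,\nabla F_n)$ vanish by $\dvr u_n=0$, while the two remaining cross-terms cancel via the algebraic identity
\begin{equation*}
(F_n F_n^\top,\nabla u_n)=(\nabla u_n\,F_n,F_n),
\end{equation*}
which is a straightforward index computation. This yields the Galerkin analog of \eqref{EnergyBound}, prevents blow-up of $(c^n,d^n)$, extends the solution to $(0,T)$, and produces uniform bounds of $\{u_n\}$ in $L^\infty(0,T;\LND)\cap L^2(0,T;\WND^d)$ and of $\{F_n\}$ in $L^\infty(0,T;\CrlH)\cap L^2(0,T;\WS)$. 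Using the equations, Sobolev embedding (available because $d\le 3$), and H\"older's inequality, the time derivatives $\{\tder u_n\}$ and $\{\tder F_n\}$ are also uniformly bounded in $L^1(0,T;(\WND^d)^*)$ and $L^1(0,T;\WS^*)$ respectively.

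By Aubin--Lions compactness, up to a subsequence, $u_n\to u$ and $F_n\to F$ strongly in $L^2(0,T;L^2(\Omega))$, with the corresponding weak and weak-$*$ limits in the bounded spaces. The quadratic terms $u_n\otimes u_n$, $F_nF_n^\top$ and $u_n\otimes F_n$ converge strongly in $L^q(Q_T)$ for some $q>1$ via interpolation of strong $L^2$ convergence with the uniform $L^\infty(0,T;L^2)\cap L^2(0,T;L^6)$ bound, and the mixed term $\nabla u_n\,F_n$ converges weakly in $L^1(Q_T)$ as a weak-strong product. Fixing the test function from the basis first and exploiting linearity plus density of $\spn\{\omega_i\}$ in $\WND^d$ and of $\spn\{\Phi_j\}$ in $\WS$, I obtain \eqref{UFAppSys} for the limit $(u,F)$; \eqref{EnergyBound} follows from lower semicontinuity of norms, and the strong initial attainment \eqref{InCondAttPApp} comes from combining the weak continuity of $(u,F)$ in $L^2$ at $t=0$ with $\limsup_{t\to 0_+}(\|u(t)\|^2+\|F(t)\|^2)\le\|u_0\|^2+\|F_0\|^2$ read off from \eqref{EnergyBound}.

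The principal technical difficulty I foresee lies in producing the Galerkin basis for $\WS(\Omega)$: because $\CrlH(\Omega)$ does not encode a boundary condition on matrix-valued fields, and the natural condition $(\nabla F-\pi\otimes\IdM)n=0$ is hidden in the weak formulation, one needs a countable, total family of smooth divergence-free matrix fields on which integration by parts in the $\varepsilon(\nabla F_n,\nabla\Phi_j)$ term produces no boundary contribution. Once the appendix supplies such a basis, the rest of the argument is the standard Leray--Hopf scheme adapted to the coupled velocity-deformation system.
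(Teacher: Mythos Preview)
Your proposal is correct and follows essentially the same scheme as the paper: Galerkin approximation using spectral bases (orthogonal simultaneously in $L^2$ and in the relevant $W^{1,2}$-type space), the key cancellation $(F_nF_n^\top,\nabla u_n)=(\nabla u_n F_n,F_n)$ yielding the energy identity, uniform bounds, Aubin--Lions compactness, and the standard weak-continuity plus upper-semicontinuity argument for strong attainment of the initial data. The only inessential difference is that the paper controls $\tder u^n$ in $L^2(0,T;(\VS(\Omega))^*)$ and $\tder F^n$ in $L^{4/d}(0,T;(\WS(\Omega))^*)$ rather than in $L^1$ of the respective duals, exploiting that the velocity basis is orthogonal in $\VS(\Omega)$ so that $\nabla P^n\omega\in L^\infty(\Omega)$ can absorb the quadratic terms directly; your choice also works since $\varepsilon>0$ is fixed here and $F_n$ enjoys an $L^2(0,T;W^{1,2})$ bound.
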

\begin{proof}
The proof of the existence is performed in several steps. First Galerkin approximations for the velocity and the deformation gradient and an approximative system are introduced and their existence is shown. The second step consists in collecting estimates that are uniform with respect to the Galerkin index. In the third step we perform the limit passage and discuss the attainment of initial data.

Let us begin with the introduction of Galerkin approximations and an approximating system. We consider $\{\omega^i\}_{i=1}^\infty$, an orthonormal basis of $\LND(\Omega)$ being simultaneously an orthogonal basis of $\VS(\Omega)$. We note that elements of such a basis are constructed in the standard way as eigenfunctions of the following problem:
\begin{equation*}
	\sum_{k=1}^3 (\nabla^k\omega,\nabla^k w)=\lambda (\omega,w)\text{ for all }w\in\VS(\Omega),
\end{equation*} 
c.f. \cite[Theorem 4.11]{MaNeRoRu96}. Moreover, by $P^n$ we denote the projection of $\LND(\Omega)$ on $\spn\{\omega^1,\ldots,\omega^n\}$. Next we consider also $\{\Phi^j\}_{j=1}^\infty$ orthonormal basis of $\CrlH(\Omega)$ that is simultaneously orthonormal basis in $\WS(\Omega)$. The existence of such a basis is discussed in the Appendix, see Theorem \ref{Thm:Basis}. The projection of $\CrlH(\Omega)$ on $\spn\{\Phi^1,\ldots,\Phi^n\}$ is denoted by $\overline{P^n}$.\\
\textbf{Step 1:} We construct Galerkin approximations.
For fixed $n\in\eN$ we are looking for a pair $(u^n,F^n)$ defined as
\begin{align*}
	u^n(t,x)&=\sum_{k=1}^nc^{n}_k(t)\omega^k(x),\ F^n(t,x)=\sum_{k=1}^nd^n_k(t)\Phi^k(x),
\end{align*}
where the functions $c^n=(c^n_1,\ldots, c^n_n)$ and $d^n=(d^n_1,\ldots, d^{m,n}_n)$ satisfy in $(0,T)$ for each $i=1,\ldots, n$
\begin{equation}\label{GalSys}
	\begin{alignedat}{2}
		(\tder u^n,\omega^i)+\bigl((u^n\cdot\nabla)u^n,\omega^i\bigr)+(\nabla u^n,\nabla\omega^i)	-\bigl(F^n(F^n)^T,\nabla\omega^i\bigr)&=0,\\
		(\tder F^n,\Phi^i)+\bigl((u^n\cdot\nabla)F^n,\Phi^i\big)-\nabla u^nF^n,\Phi^i)+\varepsilon(\nabla F^n,\nabla\Phi^i)&=0,
	\end{alignedat}
\end{equation}
with initial conditions $u^n(0)=P^n(u_0)$, $F^n(0)=\overline{P^n}(F_0)$. We note that the existence of absolutely continuous functions $c^n$ and $d^n$ on $(0,t^*)\subset(0,T)$ follows by the Carath\'eodory existence theorem.\\
\textbf{Step 2:} We derive uniform estimates with respect to the Galerkin index $n$. First, we test \eqref{GalSys}$_1$ by $u^n$ to get 
\begin{equation}\label{BalancMomTest}
\frac{1}{2}\frac{\dd}{\dt}\|u^n\|_2^2+\|\nabla u^n\|_2^2=-\left(F^n(F^n)^\top,\nabla u^n\right).
\end{equation}
Testing \eqref{GalSys}$_2$ by $F^n$ yields
\begin{equation}\label{DefGradTest}
	\frac{1}{2}\frac{\dd}{\dt}\|F^n\|_2^2+\varepsilon\|\nabla F^n\|_2^2=\left(F^n(F^n)^\top,\nabla u^n\right).
\end{equation}
Summing up \eqref{BalancMomTest} and \eqref{DefGradTest} and integrating the resulting equality over $(0,t)\subset(0,t^*)$ we obtain 
\begin{equation}\label{EnergyEst}
\begin{split}
	&\frac{1}{2}\left(\|u^n(t)\|_{L^2(\Omega)}^2+\|F^n(t)\|_{L^2(\Omega)}^2\right)+\int_0^t\|\nabla u^n(s)\|_{L^2(\Omega)}^2+\varepsilon\|\nabla F^n(s)\|_{L^2(\Omega)}^2\ds\\
	&= \frac{1}{2}\left(\|u^n(0)\|_{L^2(\Omega)}^2+\|F^n(0)\|_{L^2(\Omega)}^2\right)\leq \frac{1}{2}\left(\|u_0\|_{L^2(\Omega)}^2+\|F_0\|_{L^2(\Omega)}^2\right),
	\end{split}
\end{equation}
which yields $\sup_{t\in(0,t^*)}\bigl(|c^n(t)|^2+|d^n(t)|^2\bigr)\leq c$ and $t^*=T$ accordingly. Moreover, we have
\begin{equation}\label{NUnifEst}
	\begin{split}
		\|u^n\|_{L^\infty(0,T;L^2(\Omega))}\leq& c,\\
		\|u^n\|_{L^2(0,T;W^{1,2}(\Omega))}\leq& c,\\
		\|F^n\|_{L^\infty(0,T;L^2(\Omega))}\leq& c,\\
		\|F^n\|_{L^2(0,T;W^{1,2}(\Omega))}\leq& c.
	\end{split}
\end{equation}

Next, we need bounds on time derivatives of $u^n$ and $F^n$. We start with the estimate of $\tder u^n$. To this end we pick $\omega\in L^{2}(0,T;\VS(\Omega))$ and estimate
\begin{equation*}
\begin{split}
\left|\int_0^{t^*}\left\langle \tder u^n,\omega\right\rangle\right|= \left|\int_0^{t^*}\left\langle \tder u^n,P^n\omega\right\rangle\right|\leq &\int_0^{t^*} |(u^n\otimes u^n,\nabla P^n\omega)|+|(\nabla u^n,\nabla P^n\omega)|\\&+|(F^n(F^n)^\top,\nabla P^n\omega)|\\=&\sum_{i=1}^{3}I^n_i.
\end{split}
\end{equation*}
Employing the H\"older inequality and the embedding $W^{3,2}(\Omega)$ to $W^{1,\infty}(\Omega)$ we get
\begin{align*}
I^n_1\leq& \int_0^T \|u^n\|^2_{L^2(\Omega)}\|\nabla P^n\omega\|_{L^\infty(\Omega)}\leq c\|u^n\|^2_{L^\infty(0,T;L^2(\Omega))}\|P^n\omega\|_{L^2(0,T;W^{3,2}(\Omega))}\\
\leq& c \|u^n\|^2_{L^\infty(0,T;L^2(\Omega))}\|\omega\|_{L^2(0,T;W^{3,2}(\Omega))},\\
I^n_2\leq&\|\nabla u^n\|_{L^2(Q_T)}\|\nabla P^n\omega\|_{L^2(Q_T)}\leq c\|\nabla u^n\|_{L^2(Q_T)}\|\omega\|_{L^2(0,T;W^{1,2}(\Omega))},\\
I^n_3\leq &\int_0^T\|F^n\|^2_{L^2(\Omega)}\|\nabla P^n\omega\|_{L^\infty(\Omega)}\leq c\|F^n\|^2_{L^\infty(0,T;L^2(\Omega))}\|\omega\|_{L^2(0,T;W^{3,2}(\Omega))}.
\end{align*}
Hence the estimate 
\begin{equation}\label{TDerUUnifBound}
	\|\tder u^n\|_{L^2(0,T;(\VS(\Omega))^*)}\leq c
\end{equation}
follows using \eqref{NUnifEst}.
It remains to show the bound on $\tder F^n$. Let us pick $\xi\in L^\frac{4}{4-d}(0,T;\WS(\Omega))$. From \eqref{GalSys}$_2$ we get
\begin{equation*}
	\left|\int_0^T \left\langle\tder F^n,\xi\right\rangle \right|\leq\int_0^T|(u^n\otimes F^n,\nabla \xi)|+|(\nabla u^nF^n,\xi)|+\varepsilon|(\nabla F^n,\nabla\xi)|=\sum_{i=1}^3I^n_3.
\end{equation*}
We estimate
\begin{align*}
I^n_1\leq &\int_0^{T} \|u^n\|_{L^4(\Omega)}\|F^n\|_{L^4(\Omega)}\|\nabla \xi\|_{L^2(\Omega)}\\
\leq &c\int_0^{T}\|u^n\|^{1-\frac{d}{4}}_{L^2(\Omega)}\|\nabla u^n\|^\frac{d}{4}_{L^2(\Omega)}\bigl(\|F^n\|^{1-\frac{d}{4}}_{L^2(\Omega)}\|\nabla F^n\|^\frac{d}{4}_{L^2(\Omega)}+\|F^n\|_{L^2(\Omega)}\bigr)\|\nabla \xi\|_{L^2(\Omega)}\\
\leq &c\|u^n\|^{1-\frac{d}{4}}_{L^\infty(0,T;L^2(\Omega))}\|F^n\|^{1-\frac{d}{4}}_{L^\infty(0,T;L^2(\Omega))}\bigl(\|\nabla u^n\|^\frac{d}{4}_{L^2(Q_{T})}+\|\nabla F^n\|^\frac{d}{4}_{L^2(Q_{T})}\bigr)\|\xi\|_{L^\frac{4}{4-d}(0,T;W^{1,2}(\Omega))}\\
&+c\|u^n\|^{1-\frac{d}{4}}_{L^\infty(0,T;L^2(\Omega))}\|F^n\|_{L^\infty(0,T;L^2(\Omega))}\|\nabla u^n\|^\frac{d}{4}_{L^2(Q_{T})}\|\xi\|_{L^\frac{8}{8-d}(0,T;W^{1,2}(\Omega))}\\
I^n_2\leq &\int_0^{T}\|\nabla u^n\|_{L^2(\Omega)}\|F^n\|_{L^{4}(\Omega)}\|\xi\|_{L^{4}(\Omega)}\\ \leq &c\int_0^{T}\|\nabla u^n\|_{L^2(\Omega)}(\|F^n\|^{1-\frac{d}{4}}_{L^2(\Omega)}\|\nabla F^n\|^\frac{d}{4}_{L^2(\Omega)}+\|F^n\|_{L^2(\Omega)})\|\xi\|_{W^{1,2}(\Omega)}\\
\leq& c\|F^n\|^{1-\frac{d}{4}}_{L^\infty(0,T;L^2(\Omega))}\|\nabla u^n\|_{L^2(Q_{T})}\|\nabla F^n\|^\frac{d}{4}_{L^2(Q_{T})}\|\xi\|_{L^\frac{8}{8-d}(0,T;W^{1,2}(\Omega))}\\&+c\|F^n\|_{L^\infty(0,T;L^2(\Omega))}\|\nabla u^n\|_{L^2(Q_{T})}\|\xi\|_{L^2(0,T;W^{1,2}(\Omega))}\\
I^n_3\leq &\varepsilon\|\nabla F^n\|_{L^2(Q_T)}\|\xi\|_{L^2(0,T;W^{1,2}(\Omega))}.
\end{align*}
Hence we get using bounds on $u^n$, $F^n$ from \eqref{NUnifEst} that
\begin{equation}\label{TDerFEst}
	\|\tder F^n\|_{L^\frac{4}{d}(0,T;\WS(\Omega))^*}\leq c.
\end{equation}
\textbf{Step 3:} Having estimates \eqref{NUnifEst}, \eqref{TDerUUnifBound} and \eqref{TDerFEst} at hand we may invoke the standard compactness arguments including reflexivity, the Banach-Alaoglu theorem and the Aubin-Lions Lemma with $\WND(\Omega)^d\stackrel{C}{\hookrightarrow} L^q(\Omega)^d\hookrightarrow (\WND(\Omega)^d)^*$, $\WS(\Omega)\stackrel{C}{\hookrightarrow}L^q(\Omega)^{d\times d}\hookrightarrow (\WS(\Omega))^*$ respectively, for $q<2^*$, the Sobolev exponent, to infer the existence of a not explicitly labeled subsequence $\{(u^n,F^n)\}_{n=1}^\infty$ such that as $n\to\infty$
\begin{equation}\label{NConv}
	\begin{alignedat}{2}
		u^n& \rightharpoonup^* u&&\text{ in }L^\infty(0,T; L^2(\Omega)^d),\\
		u^n& \rightharpoonup u&&\text{ in }L^2(0,T; \WND(\Omega)^d),\\
		\tder u^n&\rightharpoonup \tder u&&\text{ in }L^2(0,T;(\VS(\Omega))^*),\\
		u^n& \to u&&\text{ in }L^2(0,T;L^2(\Omega)^d),\\
		F^n& \rightharpoonup^* F&&\text{ in }L^\infty(0,T;L^2(\Omega)^{d\times d}),\\
		F^n& \rightharpoonup F&&\text{ in }L^2(0,T;\WS(\Omega)),\\
		\tder F^n&\rightharpoonup \tder F&&\text{ in }L^\frac{4}{d}(0,T;(\WS(\Omega))^*),\\
		F^n& \to F&&\text{ in }L^2(0,T;L^2(\Omega)^{d\times d}).
	\end{alignedat}
\end{equation}
Energy inequality \eqref{EnergyBound} is a consequence of \eqref{EnergyEst} and the convergences above.

Using the regularity of $\tder u$ we infer $u\in C([0,T];(W^{1,2}_{0,\dvr}(\Omega)^d)^*)$, c.f. \cite[Lemma~7.1]{Ro13}, and as $L^2_{\dvr}(\Omega)\hookrightarrow (W^{1,2}_{0,\dvr}(\Omega)^d)^*$ we get, see e.g. \cite[Ch.~III, Lemma~1.4]{Tem77}
\begin{equation}\label{ContTimeVel}
		u\in C_w([0,T];L^2_{\dvr}(\Omega)).
\end{equation}
Using the same arguments we deduce 
\begin{equation}\label{ContTimeDefG}
		F\in C_w([0,T];\CrlH(\Omega)).
\end{equation}
We focus on the attainment of the initial data. First, we show
\begin{equation}\label{InCond}
u(0)=u_0,\ F(0)=F_0\text{ a.e. in }\Omega.
\end{equation}
To obtain the first equality we integrate \eqref{GalSys}$_1$ with fixed $i\leq n$ over $(0,t)$ and pass to the limit $n\to\infty$ with the help of \eqref{NConv}$_{2,4,8}$ and $u^n(0)\to u_0$ in $L^2(\Omega)^d$ arriving at
\begin{equation*}
	(u(t)-u_0,\omega^i)=-\int_0^t\bigl((u\cdot\nabla)u,\omega^i\bigr)+(\nabla u,\nabla\omega^i)+(FF^\top,\nabla\omega^i).
\end{equation*}
As $\omega^i$ is a basis element of $\VS(\Omega)$, it can be replaced by an arbitrary $\omega\in\VS(\Omega)$. Using \eqref{ContTimeVel} we conclude $u(0)=u_0$ a.e. in $\Omega$. We proceed similarly to obtain the second statement in \eqref{InCond}.
From \eqref{EnergyBound} we get
\begin{equation*}
\frac{1}{2}\left(\|u(t)\|^2_{L^2(\Omega)}+\|F(t)\|^2_{L^2(\Omega)}\right)+\int_0^t\|\nabla u\|^2_{L^2(\Omega)}\leq\frac{1}{2}\left(\|u_0\|_{L^2(\Omega)}^2+\|F_0\|_{L^2(\Omega)}^2\right)
\end{equation*}
for all $t\in(0,T)$ due to \eqref{ContTimeVel} and \eqref{ContTimeDefG}. The latter inequality, the weak lower semicontinuity of the norm, \eqref{ContTimeVel}, \eqref{ContTimeDefG} and \eqref{InCond} imply
\begin{align*}
\|u_0\|^2_{L^2(\Omega)}+\|F_0\|^2_{L^2(\Omega)}&\leq \liminf_{t\to 0_+}\left(\|u(t)\|^2_{L^2(\Omega)}+\|F(t)\|^2_{L^2(\Omega)}\right)\leq \limsup_{t\to 0_+}\left(\|u(t)\|^2_{L^2(\Omega)}+\|F(t)\|^2_{L^2(\Omega)}\right)\\
&\leq\|u_0\|_{L^2(\Omega)}^2+\|F_0\|_{L^2(\Omega)}^2.
\end{align*}
We conclude \eqref{InCondAttPApp} by combining the latter chain of inequalities with \eqref{ContTimeVel} and \eqref{ContTimeDefG}.
\end{proof}

Having shown the existence of solution to approximative system \eqref{ApproxSysClass} and its estimates that are independent of the regularizing parameter $\varepsilon$ we collect convergences that follow immediately.

\begin{Lemma}\label{Lem:Convergences}
	Let assumptions of Theorem \ref{Thm:Exis} be satisfied and $\{\varepsilon^r\}_{r=1}^\infty$ be a sequence such that $\varepsilon^r\to 0_+$ as $r\to\infty$. Let $\{(u^r, F^r)\}_{r=1}^\infty$ be a sequence of solutions to \eqref{ApproxSysClass} with $\varepsilon=\varepsilon^r$ constructed in Lemma \ref{Lem:ApproxEx}. Then the following uniform estimates hold 
\begin{equation}\label{RUnifEst}
	\begin{split}
		\|u^r\|_{L^\infty(0,T;L^2(\Omega))}&\leq c,\\
		\|u^r\|_{L^2(0,T;W^{1,2}(\Omega))}&\leq c,\\
		\|F^r\|_{L^\infty(0,T;L^2(\Omega))}&\leq c
	\end{split}
\end{equation}
and there exist a not explicitly labeled subsequence of $\{(u^r, F^r)\}_{r=1}^\infty$, $u\in L^\infty(0,T;\LND(\Omega))\cap L^2(0,T;\WND(\Omega)^d)$, $F\in L^\infty(0,T;\CrlH(\Omega))$, $\overline{|F|^2}\in L^\infty(0,T;\NnMeas(\overline{\Omega}))$ and $\bar\sigma\in \NnMeas([0,T]\times\overline{\Omega})$ such that
	\begin{equation}\label{NConvergences}
		\begin{alignedat}{2}
			u^r&\rightharpoonup u&&\text{ in }L^2(0,T;\WND(\Omega)^d),\\
			u^r&\rightharpoonup^* u&&\text{ in }L^\infty(0,T;L^2(\Omega)^d),\\
			u^r&\to u&&\text{ in }L^2(0,T;L^2(\Omega)^d),\\
			\tder u^r&\rightharpoonup \tder u&&\text{ in }L^2(0,T;(\VS(\Omega))^*),\\
			F^r&\rightharpoonup^* F&&\text{ in }L^\infty(0,T;L^2(\Omega)^{d\times d}),\\
			\tder F^r&\rightharpoonup \tder F&&\text{ in }L^2(0,T;(\WS(\Omega)\cap W^{3,2}(\Omega)^d)^*),\\
			|F^r|^2&\rightharpoonup^* \overline{|F|^2}&&\text{ in }L^\infty(0,T;\NnMeas(\overline{\Omega})),\\
			|\nabla u^r|^2&\rightharpoonup^* \bar{\sigma}&&\text{ in }\NnMeas([0,T]\times\overline{\Omega})).
		\end{alignedat}
	\end{equation}
	\begin{proof}
		Let us consider the sequence of solutions $\{(u^r,F^r)\}_{r=1}^\infty$ to \eqref{ApproxSysClass} from the assertion of the lemma. Then the estimates in \eqref{RUnifEst} follow directly from \eqref{EnergyBound}. Moreover, from \eqref{UFAppSys}$_1$ we get for arbitrary $\phi\in L^2(0,T;\VS(\Omega))$
		\begin{equation*}
		\begin{split}
			\left|\int_0^T\left\langle \tder u^r,\phi\right\rangle\right|&=\left|\int_0^T(u^r\otimes u^r-\nabla u^r-F^r(F^r)^\top,\nabla\phi)\right|\\
			&\leq \int_0^T (\|u^r\|^2_{L^2(\Omega)}+\|F^r\|^2_{L^2(\Omega)})\|\nabla\phi\|_{L^\infty(\Omega)}+\|\nabla u^r\|_{L^2(\Omega)}\|\nabla\phi\|_{L^2(\Omega)}\\
			&\leq c\int_0^T (\|u^r\|^2_{L^2(\Omega)}+\|F^r\|^2_{L^2(\Omega)}+\|\nabla u^r\|_{L^2(\Omega)})\|\phi\|_{W^{3,2}(\Omega)}\\
			&\leq c(\|u^r\|^2_{L^4(0,T;L^2(\Omega))}+\|F^r\|^2_{L^4(0,T;L^2(\Omega))}+\|u^r\|_{L^2(0,T;W^{1,2}(\Omega))})\|\phi\|_{L^2(0,T;W^{3,2}(\Omega))}.
			\end{split}
		\end{equation*}
		Taking into account \eqref{RUnifEst} we deduce 
		\begin{equation}\label{TDerUE}
		\|\tder u^r\|_{L^2(0,T;(\VS(\Omega))^*)}\leq c.
		\end{equation}
		Similarly, fixing an arbitrary $\Phi\in L^2(0,T;\VS(\Omega))$ we get
		\begin{equation*}
		\begin{split}
			\left|\int_0^T\left\langle \tder F^r,\Phi\right\rangle\right|=&\left|\int_0^T(u^r\otimes F^r,\nabla\Phi)+(\nabla u^rF^r,\Phi)-\varepsilon^r( \nabla F^r,\nabla\Phi)\right|\\
			\leq &\int_0^T \|u^r\|_{L^2(\Omega)}\|F^r\|_{L^2(\Omega)}\|\nabla\phi\|_{L^\infty(\Omega)}+\|\nabla u^r\|_{L^2(\Omega)}\|F^r\|_{L^2(\Omega)}\|\Phi\|_{L^\infty(\Omega)}\\&+\varepsilon^r\|\nabla F^r\|_{L^2(\Omega)}\|\nabla \Phi\|_{L^2(\Omega)}\\
			\leq &c\int_0^T(\|u^r\|_{W^{1,2}(\Omega)}\|F^r\|_{L^2(\Omega)}+\varepsilon^r\|\nabla F^r\|_{L^2(\Omega)})\|\Phi\|_{W^{3,2}(\Omega)}\\
			\leq &c(\|u^r\|_{L^2(0,T;W^{1,2}(\Omega))}\|F^r\|_{L^\infty(0,T;L^2(\Omega))}+\varepsilon^r\|\nabla F^r\|_{L^2(\Omega)})\|\Phi\|_{L^2(0,T;W^{3,2}(\Omega))}
			\end{split}
		\end{equation*}
		implying
		\begin{equation*}
		\|\tder F^r\|_{L^2(0,T;(\WS(\Omega)\cap W^{3,2}(\Omega))^*)}\leq c.
		\end{equation*}		
The convergences in \eqref{NConvergences} are obtained as a direct consequence of \eqref{RUnifEst} and \eqref{TDerUE}. Namely, \eqref{NConvergences}$_3$ follows by the Aubin-Lions Lemma as $\WND(\Omega)^d\stackrel{C}{\hookrightarrow}L^2(\Omega)^d\hookrightarrow (\VS(\Omega))^*$. 
	\end{proof}
\end{Lemma}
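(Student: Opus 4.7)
The plan is that the uniform estimates \eqref{RUnifEst} fall out immediately from the energy inequality \eqref{EnergyBound} of Lemma \ref{Lem:ApproxEx}, whose right-hand side depends only on $\|u_0\|_{L^2(\Omega)}$ and $\|F_0\|_{L^2(\Omega)}$. A corollary worth recording is $\sqrt{\varepsilon^r}\,\|\nabla F^r\|_{L^2(Q_T)} \leq c$, even though $\|\nabla F^r\|_{L^2(Q_T)}$ by itself blows up as $r \to \infty$.

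The substantive step is to derive dual-space bounds on the time derivatives that do not rely on $\varepsilon$-independent spatial regularity of $F^r$. For $\tder u^r$ I would test \eqref{UFAppSys}$_1$ against an arbitrary $\phi \in L^2(0,T;\VS(\Omega))$; using the embedding $W^{3,2}(\Omega) \hookrightarrow W^{1,\infty}(\Omega)$, the quadratic products $u^r \otimes u^r$ and $F^r(F^r)^\top$ are absorbed by a factor $\|\nabla\phi\|_{L^\infty(\Omega)}$ against the $L^\infty(0,T;L^2)$ bounds on $u^r$ and $F^r$, while the viscous term is standard. This yields $\|\tder u^r\|_{L^2(0,T;(\VS(\Omega))^*)}\leq c$. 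For $\tder F^r$ I would test \eqref{UFAppSys}$_2$ against $\Phi \in L^2(0,T;\WS(\Omega)\cap W^{3,2}(\Omega)^{d\times d})$; the same $W^{3,2}\hookrightarrow W^{1,\infty}$ trick handles $u^r\otimes F^r$ and $\nabla u^r F^r$, while the regularizing term is controlled via
\begin{equation*}
\varepsilon^r (\nabla F^r, \nabla\Phi) \leq \sqrt{\varepsilon^r}\cdot \bigl(\sqrt{\varepsilon^r}\,\|\nabla F^r\|_{L^2(\Omega)}\bigr)\|\nabla\Phi\|_{L^2(\Omega)},
\end{equation*}
which is bounded uniformly thanks to the $\varepsilon$-weighted estimate noted above.

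With these ingredients in hand the remainder is standard. Banach--Alaoglu yields all weak and weak-$*$ convergences \eqref{NConvergences}$_{1,2,4,5,6}$ along a common subsequence. The strong convergence $u^r \to u$ in $L^2(Q_T)$ follows from the Aubin--Lions lemma applied to the triple $\WND(\Omega)^d \stackrel{C}{\hookrightarrow} L^2(\Omega)^d \hookrightarrow (\VS(\Omega))^*$, where the compact embedding is Rellich--Kondrachov. For the measure-valued limits, the $L^\infty(0,T;L^2)$-bound on $F^r$ yields $\||F^r|^2\|_{L^\infty(0,T;L^1(\Omega))}\leq c$; identifying $\Meas(\overline\Omega) = C(\overline\Omega)^*$, a further Banach--Alaoglu extraction gives $|F^r|^2 \rightharpoonup^* \overline{|F|^2}$ in $L^\infty(0,T;\NnMeas(\overline\Omega))$. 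An analogous argument based on the $L^1(Q_T)$-bound on $|\nabla u^r|^2$ produces $\bar\sigma \in \NnMeas([0,T]\times\overline\Omega)$; in both cases nonnegativity is preserved under weak-$*$ limits.

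No single step is genuinely difficult; the only point that demands care is selecting test function spaces with sufficiently high Sobolev index ($W^{3,2}$) so that $W^{3,2}\hookrightarrow W^{1,\infty}$ lets the bilinear Eulerian terms be bounded using only the $L^\infty(0,T;L^2)$ estimates. Note, in particular, that no strong convergence of $F^r$ is obtained, because its spatial gradient bound is $\varepsilon^r$-dependent; this is precisely the source of the failure of $F^r(F^r)^\top \rightharpoonup FF^\top$ in the sense of distributions, and is exactly what motivates the defect measure $\mathcal{R}^M$ appearing in the definition of a dissipative solution.
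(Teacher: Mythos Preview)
Your proposal is correct and follows essentially the same route as the paper: the uniform bounds \eqref{RUnifEst} are read off from \eqref{EnergyBound}; the dual estimates on $\tder u^r$ and $\tder F^r$ are obtained by testing \eqref{UFAppSys} against $L^2(0,T;W^{3,2})$ functions and invoking $W^{3,2}\hookrightarrow W^{1,\infty}$ to control the bilinear terms via the $L^\infty(0,T;L^2)$ bounds, with the $\varepsilon^r$-term handled exactly as you describe; and the convergences \eqref{NConvergences} then follow from Banach--Alaoglu together with Aubin--Lions on the triple $\WND(\Omega)^d\stackrel{C}{\hookrightarrow}L^2(\Omega)^d\hookrightarrow(\VS(\Omega))^*$. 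Your discussion of the measure-valued limits \eqref{NConvergences}$_{7,8}$ is slightly more explicit than the paper's (which simply calls all of \eqref{NConvergences} a ``direct consequence''), but the underlying argument is the same.
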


\begin{proof}[Proof of Theorem \ref{Thm:Exis}]
We consider a sequence $\{\varepsilon^r\}_{r=1}^\infty$ such that $\varepsilon^r\to 0_+$ as $r\to \infty$. Applying Lemma \ref{Lem:Convergences} we find a sequence $\{(u^r,F^r)\}_{r=1}^\infty$ of solutions to \eqref{ApproxSysClass} with $\varepsilon=\varepsilon^r$ and a limit $(u,F)$. From the energy inequality \eqref{EnergyBound} we infer for a fixed $\tau\in(0,T)$ and $r\in\eN$
\begin{equation*}
\begin{split}
	&\frac{1}{2}\int_\Omega\left(|u(\tau)|^2+|F(\tau)|^2\right)+\int_0^\tau\int_\Omega|\nabla u|^2\\&+\frac{1}{2}\int_\Omega\left(|u^r(\tau)|^2-|u(\tau)|^2+|F^r(\tau)|^2-|F(\tau)|^2\right)+\int_0^\tau\int_\Omega\left(|\nabla u^r|^2-|\nabla u|^2\right)\\&\leq \frac{1}{2}\int_\Omega\left(|u_0|^2+|F_0|^2\right).
	\end{split}
\end{equation*}
Multiplying the latter inequality with $\theta\in C^\infty_c((0,T))$, $\theta\geq 0$ and integrating over $(0,T)$ we get 
\begin{equation*}
\begin{split}
	&\int_0^T\theta(\tau)\left(\frac{1}{2}\int_\Omega\left(|u(\tau)|^2+|F(\tau)|^2\right)+\int_0^\tau\int_\Omega|\nabla u|^2\right)\dd\tau\\&+\int_0^T\theta(\tau)\left(\frac{1}{2}\int_\Omega\left(|u^r(\tau)|^2-|u(\tau)|^2+|F^r(\tau)|^2-|F(\tau)|^2\right)+\int_0^\tau\int_\Omega\left(|\nabla u^r|^2-|\nabla u|^2\right)\right)\dd\tau\\&\leq \frac{1}{2}\int_0^T\theta(\tau)\dd\tau\int_\Omega\left(|u_0|^2+|F_0|^2\right).
	\end{split}
\end{equation*}
We pass to the limit $r\to\infty$ using \eqref{NConvergences} and arrive at
\begin{equation}\label{EILimWeak}
	\int_0^T\theta(\tau)\left(\frac{1}{2}\int_\Omega\bigl(|u(\tau)|^2+|F(\tau)|^2\bigr)+\int_0^\tau\int_\Omega|\nabla u|^2+\mathcal{D}(\tau)\right)\dd\tau\leq \frac{1}{2}\int_0^T\theta(\tau)\dd\tau\int_\Omega(|u_0|^2+|F_0|^2)
\end{equation}
with the dissipation defect $\mathcal{D}$ defined as
\begin{equation*}
	\mathcal{D}(t)= \mathcal{G}(t)(\overline\Omega)+\sigma([0,t]\times\overline\Omega)
\end{equation*}
for nonnegative measures 
\begin{equation*}
	\mathcal{G}(t) = \overline{|F|^2}(t)-|F(t)|^2\dx,\quad
	\sigma=\bar\sigma-|\nabla u|^2\dx\dt.
\end{equation*}
The fact that $\mathcal{D}\in L^\infty(0,T)$ follows immediately from the regularity of the limit objects $\overline{|F|^2}$, $\bar{\sigma}$, $u$ and $F$. The nonnegativity of $\mathcal{D}$ is a direct consequence of the weak lower semicontinuity of convex functionals. Fixing $t\in(0,T)$ and setting in \eqref{EILimWeak} $\theta(\tau)=\omega_\rho(t-\tau)$, where $\omega_\rho$ is a one-dimensional mollifier with $\rho<\frac{1}{2}\min\{t,T-t\}$, and letting $\rho\to 0_+$ we infer
\begin{equation}\label{EILim}
	\frac{1}{2}\left(\|u(t)\|_{L^2(\Omega)}^2+\|F(t)\|_{L^2(\Omega)}^2\right)+\int_0^t\|\nabla u\|_{L^2(\Omega)}^2\ds+\mathcal{D}(t)\leq \frac{1}{2}\left(\|u_0\|_{L^2(\Omega)}^2+\|F_0\|_{L^2(\Omega)}^2\right)
\end{equation}
for a.a. $t\in (0,T)$. Next we study the convergence of the sequence $\{F^r(F^r)^\top\}_{r=1}^\infty$ for which only an $L^1$ uniform estimate with respect to the space variable is available. From \eqref{RUnifEst}$_3$ we infer the existence of a not explicitly labeled subsequence $\{F^r(F^r)^\top\}_{r=1}^\infty$
and $\mathcal{R}^M\in L^\infty(0,T;\Meas(\overline\Omega)^{d\times d})$ such that
\begin{equation}\label{NonlinConv}
		F^r(F^r)^\top\rightharpoonup^* FF^\top+\mathcal{R}^M\text{ in }L^\infty(0,T;\Meas(\overline\Omega)^{d\times d})\text{ as }r\to\infty.
\end{equation}
With the help of the dissipation defect $\mathcal{D}$ we estimate the corrector $\mathcal{R}^M$.
Fixing an arbitrary $\Phi\in C(\overline{\Omega})^{d\times d}$ we can rewrite and estimate for $t\in(0,T)$
\begin{equation*}
	\begin{split}
	\int_0^t \left\langle F^r(F^r)^\top-FF^\top,\Phi\right\rangle\ds&=\int_0^t \left\langle (F^r-F)(F^r-F)^\top,\Phi\right\rangle+\int_\Omega \bigl(F(F^r-F)^\top+(F^r-F)F^\top\bigr)\cdot\Phi\ds\\&\leq \int_0^t \left\langle|F^r-F|^2,|\Phi|\right\rangle+\int_\Omega (F^r-F)\cdot\Phi^\top F+(F^r-F)\cdot\Phi F\ds.
	\end{split}
\end{equation*}
Performing the passage $r\to \infty$ employing \eqref{NConvergences}$_5$ and \eqref{NonlinConv} we obtain
\begin{equation*}
	\begin{split}
	\int_0^t \left\langle \mathcal{R}^M,\Phi\right\rangle\ds\leq \int_0^t \left\langle \overline{|F|^2}-|F|^2,|\Phi|\right\rangle\ds\leq \int_0^t \left\langle \mathcal{G},|\Phi|\right\rangle\ds
	\end{split}
\end{equation*}
and taking the supremum over $\Phi\in C(\overline{\Omega})^{d\times d}$ with $\|\Phi\|_{C(\overline\Omega)}\leq 1$ it follows that 
\begin{equation*}
		\int_0^t \|\mathcal{R}^M\|_{\Meas(\overline{\Omega})}\ds\leq \int_0^t\mathcal{D}(s)\ds\text{ for a.a. }t\in(0,T).
\end{equation*}
The next task is to show that the integral formulations in \eqref{WeakForms} are satisfied. To this end we fix an arbitrary $\psi\in C^\infty_c([0,T]\times\Omega)^d$ and $\Psi\in C^\infty([0,T]\times\overline{\Omega})^{d\times d}$ with $\dvr\psi=0$, $\dvr\Psi=0$ in $ Q_T$. Then we set $\omega=\psi(t)$, $\Phi=\Psi(t)$ in \eqref{UFAppSys} with $u=u^r$ and $F=F^r$, integrate by parts in time in the first terms and pass to the limit $r\to\infty$ using \eqref{NConvergences}$_{1,2,3,5}$ and \eqref{NonlinConv} to conclude \eqref{WeakForms}. We explain in detail the passage
\begin{equation}\label{PassR}
	\int_0^t\int_\Omega\nabla u^rF^r\cdot\Psi\to\int_0^t\int_\Omega\nabla uF\cdot\Psi\text{ as }r\to\infty.
\end{equation}
First, we observe that for $w\in C^\infty_c(\Omega)^d$, $G\in\CrlH(\Omega)$ and $\Xi\in C^\infty(\overline\Omega)^{d\times d}$ we have
\begin{equation}\label{AuxE}
\int_\Omega \nabla wG\cdot\Xi=-\int_\Omega (w\otimes G^\top)\cdot\nabla\Xi.
\end{equation}
Indeed, as the distributional divergence of $G$ vanishes in $\Omega$ we have using the Einstein summation convention 
\begin{equation*}
	\int_\Omega\partial_k w_iG_{kj}\Xi_{ij}=\int_\Omega G_{kj}(\partial_k(w_i\Xi_{ij})-w_i\partial_k\Xi_{ij})=-\int_\Omega G_{kj}w_i\partial_k\Xi_{ij}.
\end{equation*}
Obviously, for $u^r\in L^2(0,T;\WND(\Omega)^d)$ we can find $\{u^{r,\delta}\}_{\delta>0}\in L^2(0,T;C^\infty_c(\Omega)^d)$ such that
\begin{equation*}
	u^{r,\delta}\to u^r\text{ in }L^2(0,T;W^{1,2}_0(\Omega)^d) \text{ as }\delta\to 0_+.
\end{equation*}
Using the latter convergence and \eqref{AuxE} we get
\begin{equation*}
	\int_0^t\int_\Omega \nabla u^rF^r\cdot\Psi=\lim_{\delta\to 0_+}\int_0^t\int_\Omega \nabla u^{r,\delta}F^r\cdot\Psi=-\lim_{\delta\to 0_+}\int_0^t\int_\Omega u^{r,\delta}\otimes(F^r)^\top\cdot\nabla \Psi =-\int_0^t\int_\Omega u^{r}\otimes(F^r)^\top\cdot\nabla \Psi .
\end{equation*}
Hence \eqref{PassR} follows by using \eqref{NConvergences}$_{2,5}$ and the equality \eqref{AuxE} with $G=F(t)$, $w=u^\delta(t)$ and $\Xi=\Psi(t)$, where $\{u^\delta\}\subset L^2(0,T;C^\infty_c(\Omega)^d)$ approximates $u$ in $L^2(0,T;W^{1,2}_0(\Omega)^d)$.

Finally, we deal with the attainment of the initial data. The regularity of $\tder u$ provided by Lemma \ref{Lem:Convergences} implies $u\in C([0,T]; (\mathcal{V}(\Omega))^*)$, c.f. \cite[Lemma 7.1]{Ro13}. As $L^2_{\dvr}(\Omega)\hookrightarrow (\mathcal{V}(\Omega))^*$ we conclude 
\begin{equation}\label{TimeContU}
u\in C_w([0,T];L^2_{\dvr}(\Omega)),
\end{equation}
see \cite[Ch.~III, Lemma~1.4]{Tem77} for details. Obviously, performing the limit $t\to 0_+$ on both sides of \eqref{WeakForms}$_1$, where we set $\psi=\psi_1\psi_2$ for an arbitrary but fixed $\psi_1\in C^\infty([0,T])$ with $\psi_1(0)=1$ and $\psi_2\in C^\infty_c(\Omega)^d$ with $\dvr\psi_2=0$ in $Q_T$, we get 
\begin{equation*}
	(u(0)-u_0,\psi_2)=0\text{ for all } \psi_2\in C^\infty_c(\Omega)^d,\ \dvr\psi_2=0\text{ in }Q_T,
\end{equation*}
i.e., $u(0)=u_0$ a.e. in $\Omega$. We note that using similar arguments as above we infer
\begin{equation}\label{FContTime}
	F\in C_w([0,T];\CrlH(\Omega))
\end{equation} 
and 
\begin{equation}\label{FInVal}
	F(0)=F_0\text{ a.e. in }\Omega.
\end{equation}
From inequality \eqref{EILim} we get by \eqref{TimeContU} and \eqref{FContTime} for all $t\in(0,T)$
\begin{equation*}
	\frac{1}{2}\left(\|u(t)\|_{L^2(\Omega)}^2+\|F(t)\|_{L^2(\Omega)}^2\right)+\int_0^t\|\nabla u\|_{L^2(\Omega)}^2\ds\leq \frac{1}{2}\left(\|u_0\|_{L^2(\Omega)}^2+\|F_0\|_{L^2(\Omega)}^2\right).
\end{equation*}
Using \eqref{TimeContU}, \eqref{FContTime} and the weak lower semicontinuity of norms we conclude \eqref{InCondAtt} by repeating the arguments from the Step 3 of the proof of Lemma \ref{Lem:ApproxEx}.
\end{proof}

\section{Proof of Theorem \ref{Thm:DisStrongUniq}}\label{Sec:DSUniq}
For the sake of clarity we introduce the energy functional 
\begin{equation*}
	E(u,F)(t)=\frac{1}{2}\int_\Omega(|u(t)|^2+|F(t)|^2)+\int_0^t\int_\Omega|\nabla u|^2
\end{equation*}
and the relative energy functional that is understood as a specific distance of a solution $(u,F)$ and a generic pair $(\tilde u,\tilde F)$

\begin{equation*}
	\mathcal{E}\left(u,F|\tilde u,\tilde F\right)(t)=\frac{1}{2}\int_\Omega(|u(t)-\tilde u(t)|^2+|F(t)-\tilde F(t)|^2)+\int_0^t\int_\Omega|\nabla u-\nabla \tilde u|^2.
\end{equation*}

We expand
\begin{equation*}
	\mathcal{E}\left(u,F|\tilde u,\tilde F\right)(t)= E(u,F)(t) +E(\tilde u,\tilde F)(t)-(u(t),\tilde u(t))-(F(t),\tilde F(t))-2\int_0^t(\nabla u,\nabla\tilde u).
\end{equation*}
In order to conclude the dissipative-strong uniqueness for solutions to \eqref{ClassForm} we investigate the difference 
\begin{equation*}
	\left[\mathcal{E}\left(u,F|\tilde u,\tilde F\right)\right]_{\tau=0}^{\tau=t}=\mathcal{E}\left(u,F|\tilde u,\tilde F\right)(t)-\mathcal{E}\left(u,F|\tilde u,\tilde F\right)(0)
\end{equation*}
for a dissipative solution $(u,F)$ to \eqref{ClassForm} and the strong solution $(\tilde u,\tilde F)$ to \eqref{ClassForm}. 
Using the regularity of the strong solution $(\tilde u,\tilde F)$ to \eqref{ClassForm} and the embedding $W^{3,2}(\Omega)\hookrightarrow C^1(\overline{\Omega})$ we get
\begin{equation*}
	(\tilde u,\tilde F)\in L^\infty(0,T;C^1_0(\Omega)^d)\times L^\infty(0,T;C^1(\overline{\Omega})^{d\times d}).
\end{equation*}
Hence by the standard approximation arguments we find sequences
\begin{equation*}
	\{\tilde{u}^\delta\}\subset C^\infty([0,T];C^\infty_c(\Omega)^d),
	\{\tilde F^\delta\}\subset C^\infty([0,T];C^\infty(\overline{\Omega})^{d\times d})
\end{equation*}
with $\dvr \tilde u^\delta$, $\dvr\tilde F^\delta=0$ in $Q_T$, such that 
\begin{equation}\label{DeltaConv}
\begin{alignedat}{2}
	\tilde{u}^\delta&\to \tilde u&&\text{ in }L^2(0,T;C^1(\overline{\Omega})^d),\\
	\tder\tilde{u}^\delta&\to \tder\tilde u&&\text{ in }L^2((0,T)\times\Omega)^d,\\
	\tilde F^\delta&\to \tilde F &&\text{ in }L^2(0,T;C^1(\overline{\Omega})^{d\times d}),\\
	\tder\tilde F^\delta&\to \tder\tilde F &&\text{ in }L^2((0,T)\times\Omega)^{d\times d}
	\end{alignedat}
\end{equation}
as $\delta\to 0_+$. Setting $\psi=\tilde{u}^\delta$ and $\Psi=\tilde F^\delta$ in \eqref{WeakForms}, which is allowed due to properties of the latter functions, and applying convergences \eqref{DeltaConv} we obtain
\begin{align*}
	\left[(u,\tilde u)\right]_{\tau=0}^{\tau=t}&=\int_0^t (u,\tder \tilde u)+(u\otimes u-\nabla u-FF^\top,\nabla \tilde u) +\int_0^t\left\langle\mathcal{R}^M,\nabla\tilde u\right\rangle,\\
	\left[(F,\tilde F)\right]_{\tau=0}^{\tau=t}&=\int_0^t(F,\tder\tilde F)+(u\otimes F, \nabla\tilde F) +(\nabla uF,\tilde F).
\end{align*}
Using the fact that $(\tilde u,\tilde F)$ is the strong solution to \eqref{ClassForm} it follows that after obvious manipulations we arrive at
\begin{align*}
	\left[(u,\tilde u)\right]_{\tau=0}^{\tau=t}&=\int_0^t\bigl(u,-\dvr (\tilde u\otimes\tilde u)+\Delta \tilde u+\dvr(\tilde F\tilde F^\top)\bigr) +(u\otimes u-\nabla u-FF^\top,\nabla \tilde u) +\int_0^t\left\langle\mathcal{R}^M,\nabla\tilde u\right\rangle\\
	&=\int_0^t \Bigl(\bigl((u-\tilde u)\cdot\nabla\bigr) \tilde u,u-\tilde u\Bigr)-2 (\nabla u,\nabla\tilde u)-(\nabla u,\tilde F\tilde F^\top)-(\nabla \tilde u,FF^\top)+\int_0^t\left\langle\mathcal{R}^M,\nabla\tilde u\right\rangle
	\end{align*}
and similarly
\begin{align*}
\left[(F,\tilde F)\right]_{\tau=0}^{\tau=t}&=\int_0^t(F,-\dvr(\tilde u\otimes\tilde F)+\nabla\tilde u\tilde F)+(u\otimes F, \nabla\tilde F) +(\nabla uF,\tilde F)\\
	&=\int_0^t\Bigl(\bigl((u-\tilde u)\cdot\nabla\bigr)\tilde F,F-\tilde F\Bigr)+(\nabla\tilde u,\tilde FF^\top)+(\nabla u, F\tilde F^\top).
\end{align*}
Hence we obtain
\begin{align*}
	\left[\mathcal{E}\left(u,F|\tilde u,\tilde F\right)\right]_{\tau=0}^{\tau=t}=&[E(u,F)]_{\tau=0}^{\tau=t} +[E(\tilde u,\tilde F)]_{\tau=0}^{\tau=t}\\
	& -\int_0^t\Bigl(\bigl(u-\tilde u)\cdot\nabla\bigr) \tilde u,u-\tilde u\Bigr)+\Bigl(\bigl((u-\tilde u)\cdot\nabla\bigr)\tilde F,F-\tilde F\Bigr)\\
	&+\int_0^t(\nabla\tilde u,(F-\tilde F)(F-\tilde F)^\top)+(\nabla (\tilde u-u),(F-\tilde F)\tilde F^\top)\\
	&-\int_0^t\left\langle\mathcal{R}^M,\nabla\tilde u\right\rangle.
\end{align*}
Employing energy inequality \eqref{DSEI} for the dissipative solution $(u,F)$ and the energy equality for the strong solution $(\tilde u,\tilde F)$ we conclude 
\begin{align*}
	\left[\mathcal{E}\left(u,F|\tilde u,\tilde F\right)\right]_{\tau=0}^{\tau=t}+\mathcal{D}(t)
	\leq &c\int_0^t\bigl(\|\nabla \tilde u(s)\|_{C(\overline\Omega)}+\|\nabla \tilde F(s)\|_{C(\overline\Omega)}\\&+\|\tilde F(s)\|^2_{C(\overline\Omega)}\bigr)\mathcal{E}\left(u,F|\tilde u,\tilde F\right)(s)+\mathcal{D}(s)\ds.
\end{align*}
Taking into account that $(u,F)$ and $(\tilde u,\tilde F)$ emanate from the same initial data, i.e., $\mathcal{E}\left(u,F|\tilde u,\tilde F\right)\!(0)\allowbreak=0$, we deduce $u=\tilde u$ and $F=\tilde F$ a.e. in $(0,T)\times\Omega$ by the Gronwall lemma.

\section{Appendix}
The section is devoted to the construction of an orthonormal basis of $\CrlH(\Omega)$ via the spectral decomposition of a certain symmetric and compact operator.
 
Our intention is first to construct a certain operator on which we apply the following theorem summarizing results from \cite[Section D.5.]{Evans98}.
\begin{Theorem}\label{Thm:SACOp}
	Let $H$ be an infinite dimensional Hilbert space with the scalar product $(\cdot,\cdot)_H$, $K:H\to H$ be a linear compact operator, $\sigma(K)$ be the spectrum of $K$ and $\sigma_p(K)\subset\sigma(K)$ be the discrete spectrum of $K$. Then 
	\begin{enumerate}
		\item $0\in\sigma(K)$,
		\item $\sigma(K)\setminus\{0\}=\sigma_p(K)\setminus\{0\}$,
		\item either $\sigma(K)\setminus\{0\}$ is finite or $\sigma(K)\setminus\{0\}$ is a sequence tending to $0$.
		\item If $\lambda\in \sigma(K)\setminus\{0\}$ then the eigenspace $\ker(K-\lambda \IdOp)$ is finite dimensional.
		\item If $K$ is positive, i.e., $(Kv,v)_H\geq 0$ for all $v\in H$, then $\sigma(K)\subset [0,\infty)$.
		\item If $K$ is symmetric, i.e., $(Ku,v)_H=(u,Kv)_H$ for all $u,v\in H$, then $\sigma(K)\subset\eR$. Additionaly, if $H$ is separable then it possesses an orthonormal basis consisting of eigenvectors of $K$.
	\end{enumerate}
\end{Theorem}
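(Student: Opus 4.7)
The seven assertions collect the standard output of the Riesz--Schauder theory of compact operators on a Hilbert space, and my plan is to reduce them to two classical pillars: the Fredholm alternative for $K-\lambda\IdOp$, and the Rayleigh extremal characterization $\|K\|=\sup_{\|v\|_H=1}|(Kv,v)_H|$ valid for self-adjoint compact $K$. With these in hand, items (1)--(5) are essentially bookkeeping, and the substantive work is concentrated in (6).

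For (1), if $0\notin\sigma(K)$ then $K$ is invertible, and $\IdOp=KK^{-1}$ would be compact, impossible in infinite dimensions. For (2), compactness makes $K-\lambda\IdOp$ Fredholm of index zero for every $\lambda\neq 0$, so injectivity and surjectivity are equivalent; consequently any $\lambda\in\sigma(K)\setminus\{0\}$ must be an eigenvalue. For (4), an infinite-dimensional eigenspace at $\lambda\neq 0$ would make $K$ act as $\lambda\IdOp$ on an infinite-dimensional closed subspace whose unit ball is not relatively compact, contradicting compactness of $K$. For (3), I would argue by contradiction: assuming distinct eigenvalues $\lambda_n$ with $|\lambda_n|\geq\epsilon>0$ and associated eigenvectors $v_n$, the $v_n$ are linearly independent and a Gram--Schmidt construction produces an orthonormal sequence $w_n\in\spn\{v_1,\ldots,v_n\}$ with $\|Kw_n-Kw_m\|_H^2\geq\epsilon^2$ for $n\neq m$, contradicting the relative compactness of $\{Kw_n\}$. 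For (5), the coercivity estimate $((\lambda\IdOp-K)v,v)_H\geq|\lambda|\|v\|_H^2$ for $\lambda<0$ together with the Lax--Milgram lemma shows $\lambda\notin\sigma(K)$.

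The heart of the proof is (6). Self-adjointness gives $\lambda\|v\|_H^2=(Kv,v)_H\in\eR$ for any eigenvector, hence real point spectrum, which combined with (2) yields $\sigma(K)\subset\eR$. The crucial input is the existence of an eigenvector: the map $v\mapsto(Kv,v)_H$ is weakly continuous on bounded sets because $K$ is compact, so it attains its extremum on the closed unit ball, and a Lagrange multiplier argument (or direct computation of the Fr\'echet derivative of the Rayleigh quotient) identifies any extremizer as an eigenvector with eigenvalue $\pm\|K\|$. I would then iterate: orthogonal complements of sets of eigenvectors are $K$-invariant by self-adjointness, and the restriction of $K$ to such a complement stays compact and self-adjoint, so the same extremal principle supplies further eigenvectors. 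Writing $M$ for the closed linear span of the at most countable orthonormal family of eigenvectors so obtained, the restriction $K|_{M^\perp}$ satisfies $\|K|_{M^\perp}\|=0$ by construction, whence $M^\perp\subseteq\ker K$; separability of $H$ then supplies an orthonormal basis of $\ker K$, which together with the eigenvectors in $M$ forms the required orthonormal basis of $H$. The main obstacle I anticipate is precisely this last density step---ensuring that no portion of $H$ is missed by the iterative construction---since it requires assembling three ingredients at once: the $K$-invariance of orthogonal complements under self-adjointness, the Rayleigh principle applied to successive restrictions, and the separate treatment of $\ker K$, which leans essentially on separability when the kernel is infinite-dimensional.
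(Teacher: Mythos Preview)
The paper does not prove this theorem at all: it is stated as a summary of results from \cite[Section D.5.]{Evans98} and invoked as a black box in the construction of the basis in Theorem~\ref{Thm:Basis}. There is therefore no ``paper's own proof'' to compare against; you have supplied a self-contained argument where the paper simply cites a reference.

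Your argument is the standard Riesz--Schauder route and is essentially sound, but there is one slip. In item (5) you claim $((\lambda\IdOp-K)v,v)_H\geq|\lambda|\|v\|_H^2$ for $\lambda<0$; the sign is wrong, since for $\lambda<0$ and $(Kv,v)_H\geq 0$ one has $((\lambda\IdOp-K)v,v)_H=\lambda\|v\|_H^2-(Kv,v)_H\leq -|\lambda|\|v\|_H^2$. What you want is coercivity of $K-\lambda\IdOp$, namely $((K-\lambda\IdOp)v,v)_H\geq|\lambda|\|v\|_H^2$, which does follow and does feed Lax--Milgram. Alternatively, and more cheaply, item (2) already reduces $\sigma(K)\setminus\{0\}$ to eigenvalues, and positivity kills negative eigenvalues directly via $\lambda\|v\|_H^2=(Kv,v)_H\geq 0$.

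The remaining items are fine. In (3) your Gram--Schmidt argument works once you note $(K-\lambda_n\IdOp)M_n\subset M_{n-1}$, which you use implicitly; it might be worth making that inclusion explicit. In (6) the iterative Rayleigh-quotient construction together with the treatment of $\ker K$ is exactly the classical proof, and your identification of the density step as the delicate point is accurate.
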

We start with the analysis of the following Neumann eigenvalue problem in $W^{1,2}(\Omega)^{d\times d}$: Find a function $\Phi$ (and an associated $\pi$)  satisfying
\begin{alignat*}{2}
-\Delta \Phi+(\nabla \pi)^\top+\Phi=&\lambda\Phi&&\text{ in }\Omega,\\
\dvr \Phi=&0&&\text{ in }\Omega,\\
(\nabla\Phi -\pi\otimes \IdM)n=&0&&\text{ on }\partial\Omega
\end{alignat*}
understood in the sense: Find a $\Phi\in \WS(\Omega)$ such that
\begin{equation}\label{NPInt}
	(\nabla \Phi,\nabla \Xi)+(\Phi,\Xi)=\lambda(\Phi,\Xi)\text{ for all }\Xi\in \WS(\Omega).
\end{equation}
The results concerning the latter problem are summarized in the ensuing theorem.
\begin{Theorem}\label{Thm:Basis}
	Let $\Omega\subset\eR^d$ be a bounded Lipschitz domain. Then there is a sequence of eigenvalues $\{\lambda^j\}_{j=1}^\infty$ such that 
	\begin{equation*}
		1=\lambda^1=\ldots=\lambda^{d^2}\leq \lambda^{d^2+1}\leq\ldots\leq\lambda^j\leq \lambda^{j+1}, \lambda^j\to\infty\text{ as }j\to\infty
	\end{equation*}
	and a sequence of corresponding eigenfunctions $\{\Phi^j\}_{j=1}^\infty$ satisfying \eqref{NPInt}. Moreover, $\{\Phi^j\}_{j=1}^\infty$ forms an orthonormal basis of $\CrlH(\Omega)$ as well as an orthogonal basis of $\WS(\Omega)$.
\end{Theorem}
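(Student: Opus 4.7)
The plan is to reformulate \eqref{NPInt} as an eigenvalue problem for a compact self-adjoint operator $K$ on $\CrlH(\Omega)$ and then invoke Theorem \ref{Thm:SACOp}. For $f\in\CrlH(\Omega)$ I would define $Kf=\Phi$ as the unique element of $\WS(\Omega)$ solving
$$a(\Phi,\Xi):=(\nabla\Phi,\nabla\Xi)+(\Phi,\Xi)=(f,\Xi)\quad\text{for all }\Xi\in\WS(\Omega).$$
Since $\WS(\Omega)$ is a closed subspace of $W^{1,2}(\Omega)^{d\times d}$ and $a(\cdot,\cdot)$ is precisely its inner product, the Riesz representation theorem delivers existence, uniqueness, and the bound $\|Kf\|_{W^{1,2}}\leq\|f\|_{L^2}$. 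The inclusion $\WS(\Omega)\subset\CrlH(\Omega)$ ensures $K$ maps $\CrlH(\Omega)$ into itself. Testing with $\Xi=Kg$ (and swapping $f,g$) yields $(Kf,g)_{L^2}=a(Kf,Kg)=(f,Kg)_{L^2}$, so $K$ is symmetric, and $(Kf,f)=\|Kf\|_{W^{1,2}}^2\geq 0$ gives positivity.

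Compactness of $K$ follows from the $W^{1,2}$-bound on $Kf$ combined with the Rellich compact embedding $W^{1,2}(\Omega)\stackrel{C}{\hookrightarrow}L^2(\Omega)$ valid on bounded Lipschitz domains. Applying Theorem \ref{Thm:SACOp} to $K$ on the separable infinite-dimensional Hilbert space $\CrlH(\Omega)$ produces an orthonormal basis $\{\Phi^j\}_{j=1}^\infty$ of $\CrlH(\Omega)$ consisting of eigenvectors $K\Phi^j=\mu^j\Phi^j$ with $\mu^j\to 0$. Injectivity of $K$, obtained because $Kf=0$ gives $(f,\Xi)_{L^2}=0$ for all $\Xi\in\WS(\Omega)$ combined with the density of $\WS(\Omega)$ in $\CrlH(\Omega)$, forces all $\mu^j>0$. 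Setting $\lambda^j=1/\mu^j$ then turns each eigenvector identity into \eqref{NPInt} and yields $\lambda^j\to\infty$.

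To identify the smallest eigenvalue, testing \eqref{NPInt} with $\Xi=\Phi$ gives the Rayleigh characterization $\lambda=1+\|\nabla\Phi\|_{L^2}^2/\|\Phi\|_{L^2}^2\geq 1$ with equality iff $\nabla\Phi=0$, i.e.\ $\Phi$ is constant; since the space of constant $d\times d$ matrices has dimension $d^2$ and is entirely contained in $\WS(\Omega)$, this accounts for exactly $\lambda^1=\ldots=\lambda^{d^2}=1$ after orthonormalizing the constants in $L^2(\Omega)$. Orthogonality in $\WS(\Omega)$ follows from the eigenvalue relation itself: $(\Phi^i,\Phi^j)_{W^{1,2}}=\lambda^i(\Phi^i,\Phi^j)_{L^2}=0$ whenever $\lambda^i\neq\lambda^j$, and Gram-Schmidt within a shared eigenspace preserves both inner products since they are proportional there. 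Completeness of $\{\Phi^j\}$ in $\WS(\Omega)$ is then immediate: any $\Phi\in\WS(\Omega)$ with $(\Phi,\Phi^j)_{W^{1,2}}=0$ for all $j$ satisfies $\lambda^j(\Phi,\Phi^j)_{L^2}=0$, hence $(\Phi,\Phi^j)_{L^2}=0$, hence $\Phi=0$ by the $L^2$-basis property.

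The main obstacle is justifying the density of $\WS(\Omega)$ in $\CrlH(\Omega)$ needed for the injectivity of $K$; this requires approximating an $L^2$ divergence-free matrix by a smooth divergence-free one with appropriate treatment of boundary behavior (a standard but delicate mollification argument on Lipschitz domains). The remaining steps are routine applications of Rellich compactness, the spectral theorem, and the Rayleigh characterization.
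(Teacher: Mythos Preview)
Your approach is essentially identical to the paper's: both define the solution operator for the variational problem (your $K$ is the paper's $S=B^{-1}|_{\CrlH}$), verify symmetry, positivity and compactness via Rellich, apply Theorem~\ref{Thm:SACOp}, invert the eigenvalues, identify the constant matrices as the eigenspace for $\lambda=1$, and read off orthogonality and completeness in $\WS(\Omega)$ from \eqref{NPInt}. Your treatment is in fact slightly more explicit than the paper's in two places (the Rayleigh-quotient argument pinning down $\lambda=1$, and the Gram--Schmidt remark within a fixed eigenspace), and you correctly isolate the one point the paper glosses over: the injectivity step ``$S\Phi=0\Rightarrow\Phi=0$'' tacitly uses that $\WS(\Omega)$ is dense in $\CrlH(\Omega)$, which the paper states as obvious without justification.
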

\begin{proof}
We consider an operator $B:\WS(\Omega)\to(\WS(\Omega))^*$ defined as
\begin{equation*}
\left\langle B\Phi,\Xi\right\rangle = (\nabla \Phi,\nabla \Xi)+(\Phi,\Xi). 
\end{equation*}
Obviously, $B$ is bounded and linear. As an immediate consequence of the Lax-Milgram theorem $B$ is an isomorphism of $\WS(\Omega)$ onto its dual. Let us denote by $S$ the restriction of $B^{-1}$ on $\CrlH(\Omega)$. As $\CrlH(\Omega)\hookrightarrow(\WS(\Omega))^*$ and $\WS(\Omega)\stackrel{C}{\hookrightarrow}\CrlH(\Omega)$ we deduce that $S$ is compact on $\CrlH(\Omega)$. Next we get for arbitrary $\Phi^1,\Phi^2\in \CrlH(\Omega)$
\begin{equation*}
	(S\Phi^1,\Phi^2)=(S\Phi^1,BS\Phi^2)=(\nabla S\Phi^1,\nabla S\Phi^2)+(S\Phi^1,S\Phi^2)=(BS\Phi^1,S\Phi^2)=(\Phi^1,S\Phi^2),
\end{equation*}
i.e. $S$ is symmetric. Since $S$ is linear and bounded on $\CrlH(\Omega)$, it is also self-adjoint. It is shown similarly that $(S\Phi,\Phi)\geq 0$ for any $\Phi\in\CrlH(\Omega)$. Therefore according to Theorem \ref{Thm:SACOp} there exists an orthonormal basis $\{\Phi^j\}_{j=1}^\infty$ of the separable Hilbert space $\CrlH(\Omega)$ consisting of eigenfunctions of the linear bounded compact symmetric and self-adjoint operator $S$ with corresponding eigenvalues $\{\mu^j\}_{j=1}^\infty\subset [0,\infty)$ such that $\mu^j\to 0$ as $j\to\infty$. Obviously, as $S\Phi=0$ implies $\Phi=0$, $0$ is not an eigenvalue of $S$. As an immediate consequence we observe that setting $\lambda^j=\frac{1}{\mu^j}$ for each $j\in\eN$ we get that $\{\lambda^j\}_{j=1}^\infty$ is a sequence of eigenvalues of $B$ with corresponding eigenfunctions $\{\Phi^j\}_{j=1}^\infty$. The fact that $\lambda^1=\ldots=\lambda^{d^2}=1$ follows from the observation that \eqref{NPInt} is satisfied with $\lambda=1$ and $\Phi$ being equal to each basis element of $\eR^{d\times d}$. Identity \eqref{NPInt} with $\lambda=\lambda^j$ and $\Phi=\Phi^j$ and $\Xi=\Phi^i$ also implies that $\{\Phi^j\}_{j=1}^\infty$ is an orthogonal sequence in $\WS(\Omega)$ since the left hand side of \eqref{NPInt} is in fact the scalar product on $\WS(\Omega)$. Moreover, it follows that $\overline{\spn\{\Phi^j\}_{j=1}^\infty}$ has only the trivial orthogonal complement in $\WS(\Omega)$. Hence $\{\Phi^j\}_{j=1}^\infty$ is also an orthogonal basis in $\WS(\Omega)$.
\end{proof}
\vspace{1em}
\noindent\textbf{Acknowledgement}\\
The research leading to these results was supported by DFG grant SCHL 1706/4-1.

\noindent This is a pre-print of an article published in Journal of Mathematical Fluid Mechanics. The final authenticated version is available online at: \href{https://doi.org/10.1007/s00021-019-0459-9}{https://doi.org/10.1007/s00021-019-0459-9}.

\providecommand{\bysame}{\leavevmode\hbox to3em{\hrulefill}\thinspace}
\providecommand{\MR}{\relax\ifhmode\unskip\space\fi MR }
\providecommand{\MRhref}[2]{%
  \href{http://www.ams.org/mathscinet-getitem?mr=#1}{#2}
}
\providecommand{\href}[2]{#2}

\end{document}